\documentclass[11pt, a4paper]{article}
\usepackage[utf8]{inputenc} 
\usepackage[a4paper, margin=2.5cm]{geometry}
\usepackage{parskip}
\usepackage{comment}
\usepackage{enumitem}
\usepackage{multirow}
\usepackage{cite}
\usepackage{mathtools} 
\usepackage{amsthm}
\usepackage{amssymb}
\usepackage{bbm}
\usepackage{mathrsfs}
\usepackage{accents}
\usepackage{stackengine}
\usepackage{scalerel}

\mathtoolsset{showonlyrefs}

\usepackage{xcolor}
\usepackage[export]{adjustbox}
\usepackage{tikz,pgfplots}
\usetikzlibrary{
  arrows,
  arrows.meta,
  bending,
  calc,
  decorations.markings,
  decorations.text,
  intersections,
  patterns,
  spy
}
\pgfplotsset{compat=1.18}

\usepackage{algorithm}
\usepackage{algorithmic}

\definecolor{hanblue}{rgb}{0.27, 0.42, 0.81}
\definecolor{mordantred19}{rgb}{0.68, 0.05, 0.0}
\definecolor{myred}{rgb}{0.8, 0.0, 0.0}
\definecolor{mygreen}{rgb}{0.0, 0.5, 0.0}

\usepackage[
  colorlinks,
  citecolor=hanblue,
  linkcolor=mordantred19
]{hyperref}

\DeclareMathAlphabet{\mathpgoth}{OT1}{pgoth}{m}{n}

\DeclareFontFamily{U}{mathx}{}
\DeclareFontShape{U}{mathx}{m}{n}{<-> mathx10}{}
\DeclareSymbolFont{mathx}{U}{mathx}{m}{n}
\DeclareMathAccent{\widehat}{0}{mathx}{"70}
\DeclareMathAccent{\widecheck}{0}{mathx}{"71}

\tikzset{
  mid arrow/.style={
    postaction={
      decorate,
      decoration={
        markings,
        mark=at position 0.5 with {\arrow{Stealth[length=4pt,width=5pt]}}
      }
    }
  }
}

\DeclareMathOperator*{\argmin}{arg\,min}

\DeclareMathOperator{\supp}{supp}
\DeclareMathOperator{\loc}{loc}

\DeclareMathOperator{\Id}{Id}

\newcommand{\mres}{\mathbin{\vrule height 1.4ex depth 0pt width 0.13ex\vrule height 0.13ex depth 0pt width 1.0ex}}

\newcommand{\calM}{\mathcal{M}}
\newcommand{\calI}{\mathcal{I}}
\newcommand{\calF}{\mathcal{F}}

\newcommand{\calP}{\mathcal{P}}
\newcommand{\calB}{\mathcal{B}}
\newcommand{\calR}{\mathcal{R}}
\newcommand{\calD}{\mathcal{D}}

\newcommand{\calU}{\mathcal{U}}
\newcommand{\calL}{\mathcal{L}}
\newcommand{\calQ}{\mathcal{Q}}

\newcommand{\bS}{\mathbb{S}}
\newcommand{\bC}{\mathbb{C}}
\newcommand{\one}{\mathbbm{1}}
\renewcommand{\i}{\mathrm{i}}

\newcommand{\R}{\mathbb{R}}
\newcommand{\Z}{\mathbb{Z}}

\newcommand{\N}{\mathbb{N}}

\newcommand{\M}{\mathcal{M}}

\newcommand{\weakstar}{\stackrel{*}{\rightharpoonup}}
\newcommand{\weak}{\rightharpoonup}

\newcommand{\dd}{\,\mathrm{d}}

\newcommand{\mathdiam}[1]{\accentset{\diamond}{#1}}

\renewcommand{\phi}{\varphi}

\NewDocumentCommand{\narrowcheck}{m}{%
  \tikz[baseline=(X.base)]{
    \node[inner sep=0.5pt] (X) {$#1$};
    \draw[line width=0.1ex]
      ([yshift=0.4ex,xshift=-0.15em]X.north) --
      ([yshift=0.1 ex,xshift=0.00em]X.north) --
      ([yshift=0.4ex,xshift=0.15em]X.north);
  }
}

\newcommand\restr[2]{{
  \left.\kern-\nulldelimiterspace
  #1
  \vphantom{\big|}
  \right|_{#2}
}}

\theoremstyle{plain}
\newtheorem{theorem}{Theorem}[section]

\newtheorem{lemma}[theorem]{Lemma}
\newtheorem{proposition}[theorem]{Proposition}
\newtheorem{corollary}[theorem]{Corollary}
\newtheorem{conjecture}[theorem]{Conjecture}

\theoremstyle{definition}
\newtheorem{definition}[theorem]{Definition}

\theoremstyle{remark}

\newtheorem{example}[theorem]{Example}

\title{
Towards sparse optimization over convex loops: Equivalence of Square Root Velocity distance and Wasserstein-Fisher-Rao}
\author{Giacomo Cristinelli$^\ast$, Jos\'e A. Iglesias\thanks{Department of Applied Mathematics, University of Twente, 7500AE Enschede, The Netherlands \newline (\texttt{g.cristinelli@utwente.nl, jose.iglesias{@}utwente.nl})}}
\date{}

\begin{document}

\maketitle
\begin{abstract}
    The Wasserstein--Fisher--Rao (WFR) distance on $S^{2}$ has recently been shown to coincide with a classical elastic distance between $S^{2}$-immersions in the theory of Riemannian shape analysis.
    While this correspondence holds in dimension $2$, the analogous statement fails in general on $S^{1}$ and, in the case of convex curves, it cannot be derived from existing two-dimensional arguments.
    In this paper, we establish that for convex absolutely continuous immersions of $S^{1}$ in the plane, the shape distance induced by the square root velocity transformation (SRVT) is indeed equivalent to the WFR distance acting on their associated length measures. The proof exploits a monotonicity principle for optimal transport on the universal cover of the circle, which in turn guarantees the existence of an optimal reparametrization achieving the SRVT infimum and enables a one-dimensional unbalanced optimal transport reformulation.
    Motivated by this equivalence, we further investigate the role of sparsity in shape optimization problems formulated in terms of length measures and regularized by the WFR distance. We study linear optimization over the corresponding balls, for which we prove a finiteness result when the reference measure is discrete, and propose a convex, positively one-homogeneous regularizer suitable for conditional gradient algorithms.
\end{abstract}
\small
\vskip .3truecm \noindent Keywords: unbalanced optimal transport, elastic shape analysis, nonsmooth optimization, sparsity
 
\vskip .1truecm \noindent 2020 Mathematics Subject Classification: 49Q22, 49Q10, 52A10, 52A38.
\normalsize
\section{Introduction}
Elastic shape analysis provides a differential–geometric framework for comparing curves and surfaces modulo reparametrization, with applications ranging from computer vision and medical imaging to statistics and inverse problems. In this setting, shapes are typically modeled as immersions equipped with reparametrization-invariant Riemannian metrics, and shape distances are defined as geodesic distances in infinite-dimensional manifolds. Specifically, these are formulated by identifying the corresponding tangent spaces with deformation fields and using a first order Sobolev metric on these, possibly weighting differently the contributions on directions tangential and orthogonal to the base object. While this approach is conceptually natural, it often leads to highly nonconvex optimization problems that are analytically delicate and computationally demanding.

A major breakthrough in the field of one-dimensional elastic shape analysis was the introduction of the square root velocity transformation (SRVT), which maps an immersed curve to a representation in a flat Hilbert space and was shown in \cite{SriKlaJosJer11} to turn Sobolev-type elastic metrics with specific weights into the standard $L^2$ metric. This transformation allows distances to be computed more efficiently, by optimizing over a class of reparametrizations. Over the past decade, the SRVT has become a central tool in elastic shape analysis, both theoretically and numerically. 
An extension to surfaces was also proposed, called square root normal field (SRNF) distance \cite{SriSRNF}.

More recently, a connection has emerged between elastic shape analysis and unbalanced optimal transport, and, in particular, the Wasserstein–Fisher–Rao (WFR) distance, also known as the Hellinger–Kantorovich distance, which extends classical optimal transport by allowing for mass creation and destruction. In the case of $S^2$-immersions, it has been shown that the elastic distance induced by the square root normal field (SRNF) distance coincides exactly with the WFR distance between suitably defined surface area measures \cite{BauHarKlas22, HaBaKla24}. This result provides a striking reinterpretation of elastic shape matching as an optimal transport problem and offers new analytical and algorithmic perspectives. However, in this two-dimensional framework, it remains computationally challenging to reconstruct the unique convex surface (up to translation) associated with such a measure \cite{Schneider1993ConvexSurfaces, sellaroli2017algorithmreconstructconvexpolyhedra}.

This difficulty does not arise in dimension one, where the measure-to-(convex-)curve correspondence admits an explicit analytical description. However, the geometry of $S^1$ immersions \cite{michor2006riemannian, bauer2024elastic} differs fundamentally from that of surfaces \cite{MichorSRNF}. In fact, the equivalence between SRVT-based elastic distances and WFR distances fails in general for closed curves. 
Even when restricting attention to convex planar curves and their length measures \cite{charon2020lengthmeasuresplanarclosed}, proving the equivalence is not trivial, and the existing two-dimensional arguments break down. Indeed, in the one-dimensional setting, the reparametrization group, naturally sought inside $W^{1,1}(S^1,S^1)$, is not dense in the space of piecewise continuous bijections with respect to any topology in which the SRVT functional is continuous: removing finitely many points disconnects $S^1$ but not $S^2$, so a piecewise constant reparametrization of the circle has no absolutely continuous approximation with controlled derivative.
Nevertheless, drawing on the extensive literature on optimal transport on $S^1$ \cite{Delon_2010, bonet2023sphericalslicedwasserstein, hundrieser2021statisticscircularoptimaltransport}, it is possible to construct optimal reparametrizations from a transport-map perspective by formulating a suitable Monge problem.

Beyond this theoretical correspondence, the SRVT–WFR equivalence has important implications for optimization and inverse problems involving shapes. In recent work on inverse obstacle scattering \cite{Eckhardt_2019}, elastic energies have been employed as Tikhonov regularizers on shape manifolds, yielding parametrization-invariant formulations with rigorous regularization properties and convergence guarantees. In that setting, the bending energy acts as a geometric stabilizer that penalizes oscillatory or highly curved shapes. 

In light of such applications, we explore some interactions between the WFR distance and techniques of optimization in spaces of measures. One important such direction is distributionally robust optimization, in which one considers maximization problems reflecting the worst case scenario among distributions closed to an empirical one, most often measured with the Wasserstein distance \cite{MohKuh18,GaoKle23}. 

Another important line of research in optimization with measures towards implementable algorithms is the applicability of sparse optimization methods. A basic intuition and motivation behind these is that they construct the solution progressively by iteratively finding elements of some dictionary. This is the case in fully-corrective generalized conditional gradient methods \cite{BreCarFanWal24} for composite optimization, in which the coefficients of the representation of the iterates are also optimized over, requiring the nonsmooth term of the cost to be one-homogeneous (often a norm). In that case, the insertion step is again a linear optimization problem over the corresponding unit ball. These optimization problems can then be restricted to be over the set of extreme points of said ball, provided that one has a characterization of them. This program has led to efficient optimization methods based on the usual norm for $\M(\Omega)$ and extremals supported on single points both with discretization on- or off-the-grid \cite{DenDuvPeySou20}, but also to other more involved regularizers where extremals are supported on open curves \cite{BreCarFanRom23,LavBlaAub24} and the total (gradient) variation where they arise from the boundaries of simple sets, both with off-the-grid polygonal parametrizations \cite{CasDuvPet22} and on simplicial meshes for PDE-constrained optimization \cite{Cristinelli_2025, cristinelli2025linearconvergenceonecutconditional}. The key first result for applying such generalized conditional gradient methods (but also other results such as exact recovery in inverse problems \cite{CarDel23}) would be a characterization of the extreme points of the unit ball. Cases particularly related to our context are those in which such a characterization is achieved for regularizers arising from optimal transport, such as dynamic formulations of both Wasserstein \cite{BreCarFanRom21} and Wasserstein-Fisher-Rao \cite{BreCarFan22} distances resulting on measures supported on paths, and unbalanced Kantorovich-Rubinstein norms \cite{CarIglWal25} which take into account differences of mass by an infimal convolution construction and leads to ``dipoles'', signed measures supported on two points with equal size and opposite sign. 

\subsection*{Contribution and outlook}
The first main contribution of this paper is to show that a precise equivalence between the SRVT distance and Wasserstein-Fisher-Rao can be recovered in a natural and geometrically meaningful setting. 
We prove that, for convex absolutely continuous immersions of $S^1$ in the plane, the shape distance induced by the SRVT coincides with the Wasserstein--Fisher--Rao distance acting on the associated length measures on the circle.
This result provides a one-dimensional analogue of the previously known two-dimensional correspondence. In addition, in contrast to the two-dimensional setting, our analysis guarantees the existence of optimal reparametrizations for convex curves within the SRVT framework.

In the second part of the paper, we study sparsity phenomena in optimization problems involving the Wasserstein--Fisher--Rao distance, with a view to shape optimization problems in terms of length measures. Specifically, we analyze linear optimization over WFR balls and prove a finiteness result for extremal solutions when the reference measure is also finitely supported.
We propose in Definition \ref{def: homogenized} a ``homogenized'' version $\bS_\nu(\cdot)$ of the WFR energy, again with respect to a fixed reference measure $\nu$. It compares the measures rescaled by their total mass, which also has the geometric interpretation of adding scale invariance, which could in itself be desirable in applications where the fixed reference measure reflects a prior on desired shapes, but not sizes, of the results.

 To conclude, we give some indications for the structure we expect for the extreme points arising from $\bS_\nu$ with $\nu$ finitely supported, namely that measures supported in up to three points appear, while leaving the full characterization for future work. This expected result would give rise to algorithms in which the iterates are themselves convex loops, and the reconstruction is performed by progressively adding vertices to them to minimize problems of the type
\begin{equation}
    \min_{\mu\in \calM^0(S^1)} \calF (K\mu) + \bS_\nu(\mu),
\end{equation}
where $\calM^0(S^1)$ is the space of Borel measures on $S^1$ with vanishing first moment, $Y$ is a Hilbert space, $K:\calM^0(S^1)\to Y$ is an operator reflecting the effect of the geometry on measurements, and $\calF: Y\to [0,+\infty]$ is a fidelity term. We see all of the results presented in this paper as a first step towards such effective computational methods.

More generally, the results of the present paper suggest a complementary and measure-theoretic viewpoint on elastic regularization strategies. By identifying the SRVT elastic distance with the Wasserstein--Fisher--Rao distance on length measures, we provide an alternative formulation of elastic Tikhonov regularization directly at the level of measures. From this perspective, regularization by elastic energies can be interpreted as penalizing local stretching or compression. This interpretation is particularly appealing for inverse problems, as it opens the door to sparsity-promoting methods and convex optimization techniques that are difficult to access in the classical manifold-based framework.

\subsection*{Organization of the paper}

In Section \ref{sec: pre} we introduce the square root velocity transformation framework and recall the logarithmic entropy transport formulation of the WFR unbalanced optimal transport distance used throughout, together with its main structural properties and compactness results. 

Section \ref{sec: equi} establishes the core theoretical result: the identification of the SRVT distance on convex planar loops with the corresponding unbalanced transport metric between their length measures. In Section \ref{sec: adms} we consider the admissibility of length measures to be able to apply existing results on the logarithmic entropy transport point of view of WFR, to then discuss their periodic lifting in Section \ref{sec: lift} and complete the equivalence proof in Section \ref{sec: equiproof}.

In Section \ref{sec: opti} we turn to the optimization perspective. Section \ref{sec: linearopt} reformulates a linear maximization over WFR balls around a discrete measure as a finite-dimensional problem on couplings and derives sparsity results for extremal solutions, including the case with additional moment constraints. Finally, in Section \ref{sec: homogenized} we introduce a one-homogeneous version of the WFR distance with a view to sparse optimization methods and give some evidence about the structure of the extreme points of the respective balls. 

Technical arguments and auxiliary results which closely follow existing literature, in particular concerning the structure of optimal couplings and extended-valued cost functions, are collected in Appendix \ref{sec: app1}.

\subsection*{Notation and standing assumptions}
Throughout the paper we work on a Polish metric space $(X,d)$ (typically $X=S^1$ or $X=\R,\bC$). 
We denote by $\mathcal{M}(X)$ the space of finite signed Radon measures on $X$ and by 
$\mathcal{M}_+(X)$ the cone of nonnegative measures.  
For $\mu\in\mathcal{M}(X)$, we write $|\mu|$ for its total variation measure and 
$\operatorname{supp}\mu$ for its support, defined as the smallest closed set $F\subset X$ such that 
$|\mu|(X\setminus F)=0$. We denote the one-dimensional Lebesgue measure on $\R$ by $\dd t$ and the Hausdorff measure on $S^1$ by $\dd s$, also outside integrals.
Dirac masses are written $\delta_x$ for $x\in X$.  
The set of nonnegative point masses is
\begin{equation}
    \Delta_+(X):=\{ r\delta_x : r\ge 0,\ x\in X \},
\end{equation}
and the cone of nonnegative finitely supported measures is
\begin{equation}
    \mathcal{D}_{+}(X)
    :=\bigcup_{n=1}^\infty \mathcal{D}_+^n,
    \qquad 
    \mathcal{D}_+^n
    :=\left\{
        \sum_{i=1}^n r_i \delta_{x_i} :
        r_i\ge 0,\ x_i\in X
    \right\}.
\end{equation}
We denote by $\mathcal{P}(X)\subset\mathcal{M}_+(X)$ the set of probability measures and by
\begin{equation}
    \mathcal{P}_{+,f}^n(X)
    := \mathcal{P}(X)\cap \mathcal{D}_+^n(X)
\end{equation}
the set of finitely supported probability measures with exactly $n$ atoms.
Weak-* convergence arising from the Riesz-Markov characterization of $\mathcal{M}(X)$ as a dual space is denoted by $\mu_n \weakstar \mu$ and means
\begin{equation}
    \lim_{n\to\infty}\int_X \varphi\, \dd\mu_n = \int_X \varphi\, \dd\mu
    \qquad\text{for all } \varphi\in C_0(X).
\end{equation}
Narrow convergence of measures is denoted by $\mu_n\weak\mu$ and means
\begin{equation}
    \lim_{n\to\infty}\int_X \varphi\, \dd\mu_n = \int_X \varphi\, \dd\mu
    \qquad\text{for all } \varphi\in C_b(X).
\end{equation}

If $T:X\to Y$ is a measurable map and $\mu\in\mathcal{M}(X)$, the push-forward measure 
$T_\#\mu\in\mathcal{M}(Y)$ is defined by
\begin{equation}
    T_\#\mu(B) := \mu(T^{-1}(B)) 
    \qquad\text{for all Borel sets } B\subset Y.
\end{equation}
When $X\subset\mathbb{R}^n$ we also use the affine subspace of measures with vanishing first moment,
\begin{equation}
    \mathcal{M}^0(X)
    := \left\{\mu\in\mathcal{M}_+(X) :
        \int_{X} x\,\dd\mu(x) = 0\in\mathbb{R}^n
    \right\}.
\end{equation}
For $\mu\in\mathcal{M}_+(X)$ and $1\le p\le\infty$, we denote by $L^p(X,\mu)$ the usual Lebesgue space of $\mu$-measurable functions modulo $\mu$-a.e.\ equality, with norm
\begin{equation}
    \|f\|_{L^p(X,\mu)}
    :=
    \left(
        \int_X |f|^p\,\dd\mu
    \right)^{1/p}
    \quad (1\le p<\infty),
\end{equation}
and the essential supremum norm for $p=\infty$.  
We write
\begin{equation}
    L^p_+(X,\mu)
    :=
    \{ f\in L^p(X,\mu) : f\ge 0 \ \mu\text{-a.e.} \}
\end{equation}
for the positive cone in $L^p(X,\mu)$.

We denote by
\begin{equation}
    \mathcal M_p(X)
    :=
    \left\{
        \mu\in\mathcal M_+(X) :
        \int_X |x|^p\,\dd\mu(x) < \infty
    \right\}
\end{equation}
the space of nonnegative measures with finite $p$-th moment, and
\begin{equation}
    \mathcal M_{p}^0(X)
    :=
    \mathcal M_p(X)\cap \mathcal M^0(X)
\end{equation}
those with finite $p$-th moment and vanishing first moment. 
We say that a family $(\mu_k)\subset\mathcal M_p(X)$ has
uniformly bounded $p$-th moments, for some $p\ge 1$, if
\begin{equation}
    \sup_k \int_X |x|^p\,\dd\mu_k(x) < \infty.
\end{equation}
For measures on $X\times X$, we say that $\gamma\in \mathcal M(X\times X)$ has finite $p$-moment if
\begin{equation}
    \int_{X\times X} \,\dd(x,y)^p\, d\gamma(x,y) < \infty,
\end{equation}
and we write $\mathcal M_p(X\times X)$ accordingly.
Given $\mu_0,\mu_1\in\mathcal M_+(X)$, the set of balanced couplings is
\begin{equation}
    \Gamma_b(\mu_0,\mu_1)
    :=
    \left\{
        \gamma\in\mathcal M_+(X\times X) :
        (\operatorname{proj}_0)_\#\gamma=\mu_0,\;
        (\operatorname{proj}_1)_\#\gamma=\mu_1
    \right\},
\end{equation}
where $\operatorname{proj}_i$ are the coordinate projections.  

We denote by $S_n:=\{\zeta\in \R_+^n: \sum_i^n\zeta_i=1\}$ the standard $n$-simplex, often used for coefficients of convex combinations. Observe that we denote the nonnegative real numbers as $\R_+ = [0,+\infty)$.

These conventions will be used throughout the paper.  
Whenever additional regularity or structural assumptions are required, they are stated explicitly at the corresponding point.

\section{Preliminaries}\label{sec: pre}
\subsection{Square root velocity transformation}

Following the use of the complex square root mapping in \cite{Younes_2008}, the square root velocity transformation was first introduced in \cite{SriKlaJosJer11} in the setting of plane curves as an efficient method for locally computing the elastic distances between smooth curves by reformulating the problem as a flat $L^2$ minimization. More precisely, it defines a local isometry from the space of smooth immersed planar curves, equipped with a specific $1$-Sobolev elastic Riemannian metric, to a flat $L^2$ Hilbert manifold. Generalizations to elastic metrics with general coefficients have been proposed and studied in \cite{Needham_2020} and \cite{bauer2024elastic}.

Let $\calI$ be the space of closed, oriented planar absolutely continuous curves up to translation, i.e.
\begin{equation}
    \calI:=\{c\in W^{1,1}(S^1, \bC): c'(s)\neq 0\quad  a.e.  \quad s\in S^1\}/\sim.
\end{equation}
where $c_1\sim c_2$ in $\calI$ if there exists $z_0\in \bC$ such that the images satisfy $c_1(s)=c_2(s)+z_0$ for a.e. $s\in S^1$ and $c_1$ and $c_2$ have the same orientation.
Absolute continuity as regularity for the preshape space is a natural choice \cite{Bru16, bauer2024elastic}, because, in the setting of open curves, the space of AC immersions represents the metric completion of the geodesic distance with respect to any $1$-Sobolev elastic Riemannian metric on the space of smooth immersions (see \cite[Corollary 2.1]{bauer2024elastic}). 

\begin{definition}
    The square root velocity transformation is the function $\Phi:\calI\to L^2(S^1,\bC\setminus\{0\})$ given by 
\begin{equation}
    \Phi(c):=\frac{c'}{\sqrt{|c'|}}.
\end{equation}
\end{definition}
Note that this is well defined since it's invariant with respect to translations of $c$ and
\begin{equation}
    \|\Phi(c)\|^2_{L^2}=\int_{S^1}\frac{1}{|c'|}|c'|^2\dd s=\int_{S^1}|c'|\dd s  <\infty.
\end{equation}

We now define a parametrization-invariant distance on $\calI$ through the action by precomposition of absolutely continuous reparametrizations: 
\begin{equation}
    \Pi(S^1):=\{\phi\in W^{1,1}(S^1,S^1): \phi^{-1}\in W^{1,1}(S^1),\, \phi'(s)>0 \; \text{for a.e.}\; s\in S^1\}.
\end{equation}
This action defines the quotient space $\calQ:=\calI/\Pi(S^1)$. 
\begin{definition}
    The square root velocity transformation distance $d_{\operatorname{SRVT}}$ between $c_0,c_1\in \calI$ is
    \begin{equation}
        d^2_{\operatorname{SRVT}}(c_0,c_1):=\inf_{\phi\in \Pi(S^1)}\int_{S^1}|\Phi(c_0)- \Phi(c_1\circ \phi)|^2\dd s.
    \end{equation}
\end{definition}
The existence of an optimal reparametrization is not guaranteed in general and, to the best of our knowledge, remains an open problem for closed curves. However, for open curves, existence results have been established under $C^1$ regularity assumptions, while non-existence has been shown for certain Lipschitz curves; see \cite{Bru16} for details.

\subsection{Length measure for convex planar curves}
\begin{definition}\label{def: length}
    Given a loop $c\in \calI$, we define its length measure $\mu_c\in \M_+(S^1)$ as the pushforward of the arc-length measure $|c'(s)|\dd s$ via the (tangent) Gauss map $T_c:S^1\to S^1$ which is defined by
    \begin{equation}
        T_c(s):=\frac{c'(s)}{|c'(s)|}.
    \end{equation}
    Explicitly,
\begin{equation}\label{eq: l2 to meas}
    \mu_c:= (T_c)_\#(|c'(s)|\dd s )=\int_{T_c^{-1}(\cdot)} |c'(s)| \;\dd s.
\end{equation}
\end{definition}
The measure $\mu_c$ is called length measure, as it returns the length of the subset of points in $\operatorname{Im}(c)$ whose tangent direction lies inside any Borel set $U\subset S^1$. 
\begin{figure}[!ht]
    \centering
    \begin{tikzpicture}[scale=0.8]
    \fill[fill=black!10] (-3,0)--(-3,1)-- (-4.5,2.5)-- (-6,1) to[out=270, in=240]  (-3,0);
        \begin{scope}[every node/.style={sloped,allow upside down}]
        \draw[mid arrow] (-3,0)-- (-3,1);
        \draw[mid arrow] (-3,1)-- (-4.5,2.5);
        \draw[mid arrow] (-4.5,2.5)--(-6,1);
        \draw[mid arrow] (-6,1) to[out=270, in=240] (-3,0);
        \end{scope}
\end{tikzpicture}
\hspace{3cm}
\begin{tikzpicture}
    \filldraw[blue] (3,3.7) circle(1pt)
    (1.5,3.4) circle (1pt)
    (1.5,0.6) circle (1pt);
    \draw[blue] (3, 3)--(3,3.7)
    (2.25, 2.65)--(1.5,3.4)
    (2.25, 1.35)--(1.5,0.6);
    \filldraw[fill=blue!20, draw=blue] (3, 1) to [out=0, in=170] (4,1.1) to [out=-10, in=240] (4.2,1.15) to[out=60, in=280] (4.1, 1.8) to[out=100, in=305] (3.8, 2.6) to (3.71,2.71) arc (405:270:1 and 1);
    \draw (3,2) circle (1cm);
\end{tikzpicture}
    \caption{An example of a loop $c\in\calI$ on the left, and its length measure $\mu_c$ on the right.}
\end{figure}
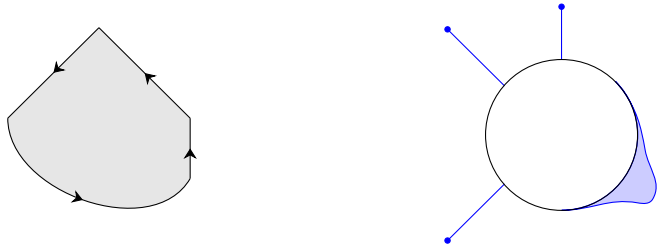
The map $c \mapsto \mu_c$ from the shape space $\mathcal{Q}$ to the space of nonnegative measures $\mathcal{M}_0(S^1)$ is well-defined and surjective. However, it is not injective: each measure $\mu$ admits infinitely many preimages under this map, see Figure \ref{fig:folding} for an example. 
\begin{figure}[b]
    \centering
    \begin{tikzpicture}[scale=0.5]
        \fill[fill=black!10] (0,0)--(5,0)--(5,5)--(0,5)--cycle;
        \begin{scope}[every node/.style={sloped,allow upside down}]
        \draw[mid arrow] (0,0)-- (0,5);
        \draw[mid arrow] (0,5)-- (5,5);
        \draw[mid arrow] (5,5)-- (5,0);
        \draw[mid arrow] (5,0)--  (0,0);
        \end{scope}
    \end{tikzpicture}
    \hspace{3cm}
    \begin{tikzpicture}[scale=0.5]
        \fill[fill=black!10] (0,0)--(5,0)--(5,3)--(3,3)--(3,5)--(0,5)--cycle;
        \begin{scope}[every node/.style={sloped,allow upside down}]
        \draw[mid arrow] (0,0)--  (0,5);
        \draw[mid arrow] (0,5)--  (3,5);
        \draw[mid arrow] (3,5)--  (3,3);
        \draw[mid arrow] (3,3)--  (5,3);
        \draw[mid arrow] (5,3)--  (5,0);
        \draw[mid arrow] (5,0)--  (0,0);
        \end{scope}
    \end{tikzpicture}
    \caption{Two loops with the same corresponding length measure.} 
    \label{fig:folding}
\end{figure}
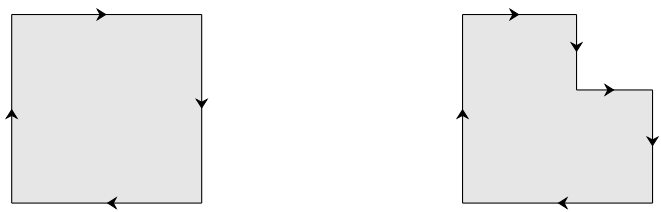
As a result, the measure-theoretic formulation alone is insufficient to capture the full geometric structure of the loops in $\mathcal{I}$.

\begin{definition}
    We say that $c\in \calI$ is a convex loop if its image is the boundary of some convex set in $\bC$. We denote by $\mathdiam{\calI}$ the space of such curves, and we assume that they are all positively oriented, meaning that the interior of the convex set lies on the left-hand side of $c'$ almost everywhere. We denote the quotient space by $\mathdiam{\calQ}=\mathdiam{\calI}/\Pi(S^1)$.
\end{definition}

\begin{theorem}\label{thm: one-to-one}
    The set of curves in $\mathdiam{\calQ}$ is in one-to-one correspondence to $\calM^0(S^1)$.
    \begin{proof}
    \cite[Theorem 3.2]{charon2020lengthmeasuresplanarclosed} proves the results in the setting of Lipschitz immersions. We may equivalently state the result with absolutely continuous parametrizations because every such convex loop is non-intersective and rectifiable, so it admits a unique arc-length parametrization, which is Lipschitz, so the two quotient spaces coincide.
    \end{proof}
\end{theorem}

A key property of the velocity transformation in the setting of convex loops $\mathdiam{\mathcal{I}}$ is that the image of the Gauss map $c\mapsto T_c$ (or the square root velocity transformation $\Phi$) lies in the subspace of $L^2$ functions whose directions rotate monotonically.
This structural feature is crucial for enabling the optimal transport reformulation presented in Theorem~\ref{thm: elastic-WFR}.

\begin{definition}\label{def: deg-pair}
    We say that a loop $c\in \mathdiam{\calI}$ is degenerate if there exists $r>0$ and $x\in S^1$ such that the associated length measure $\mu_c$ is given by $\mu=r\delta_{x}+r\delta_{-x}$. If this is not the case, we say that $c$ is non-degenerate.
\end{definition}
Intuitively $c$ is a degenerate loop when its image $\operatorname{Im}(c)$ in $\bC$ lies on a segment, so the convex shape that it encloses is not 2-dimensional. This will be a degenerate scenario also in the unbalanced optimal transport framework when complete destruction or creation of mass happens, see Section \ref{sec: adms}.

\subsection{Wasserstein-Fisher-Rao}
The Wasserstein–Fisher–Rao (WFR) metric can be interpreted as a 2-Wasserstein distance on the positive cone, where couplings are required to satisfy a homogeneous marginal constraint, which is a constraint on the second moment, rather than a projection constraint in the standard balanced sense, see Definition \ref{def: wfr}. In the case of the circle $S^1$, this cone is isomorphic to $\mathbb{R}^2\cong \bC$ equipped with the Euclidean distance. Therefore, in the following discussion, we work directly in $\mathbb{C}$ without explicitly constructing the cone. For further details on the general construction, we refer to \cite[Section 7]{Liero_2017}.

Since $S^1$ is compact, the weak* topology on $\calM(S^1)$ coincides with the narrow topology, and we will use the term weak topology unambiguously. This is not the case for the space of measures on the cone, where we will specify depending on the claim.

\begin{definition}\label{def: wfr}
 Given two Borel measures $\mu_0,\mu_1\in \calM(S^1)\subset \calM(\bC)$ (inclusion lift), we define the set of homogeneous couplings as
    \begin{equation}
        \Gamma(\mu_0,\mu_1):=\{\gamma\in \calM_2(\bC\times \bC): (\operatorname{\theta}_i)_\#(\operatorname{r}_i^2\gamma)=\mu_i, \; i=0,1\},
    \end{equation}
    where $\operatorname{r}(x):=|x|$ and $\theta(x)=\frac{x}{|x|}$ on $\bC\setminus\{0\}$ and $\theta(0)=\overline{x}$ for some fixed $\overline{x}\in S^1$. Notice that $\operatorname{r}^2\gamma$ does not charge $0$, so the marginal conditions do not depend on the choice of $\overline{x}$.
    We define the Fisher-Rao energy of a coupling $\gamma\in \Gamma(\mu_0,\mu_1)$ as 
    \begin{equation}
        J(\gamma):=\int_{\bC\times \bC} \left|x- y\right|^2\;\dd\gamma(x,y),
    \end{equation}
    and finally the Wasserstein-Fisher-Rao distance as
    \begin{equation}
        \mathcal{U}(\mu_0,\mu_1):=\inf_{\gamma\in \Gamma(\mu_0,\mu_1)} \sqrt{J(\gamma)}.
    \end{equation}
    If $\Gamma$ is empty, then $\mathcal{U}(\mu_0,\mu_1):=+\infty$.
\end{definition}

Let us start listing some properties of the Wasserstein-Fisher-Rao distance from its convexity.

\begin{theorem}\label{thm: relaxation}
    The squared distance $\mathcal{U}^2$ is the closed convex envelope of the functional $\mathcal{S}:\M(S^{1})\times \M(S^{1})\to \R\cup\{\infty\}$ defined by 
    \begin{equation}
            \mathcal{S}(\mu_0,\mu_1):=\begin{cases}
                |\sqrt{r_0}x_0-\sqrt{r_1}x_1|^2 & \text{ if } \mu_i=r_i\delta_{x_i}\in \Delta_+(S^{1})\\
                +\infty & \text{ otherwise}.
            \end{cases}
        \end{equation}
    \begin{proof}
        From \cite[Theorem 3.5]{SavSod23}.
    \end{proof}
\end{theorem}
\begin{definition}\label{def: weak-adms}
    We say that two measures $\mu_0,\mu_1$ are weakly admissible, if \[\mu_i(\{x\in S^1: \operatorname{dist}(x, \supp(\mu_j))\geq \pi/2\})=0\quad \text{for }i\neq j.\]
\end{definition}
We list now some properties of the optimal couplings in the definition of $\mathcal{U}$.
\begin{proposition}\label{prop: basicWFR}
    Let $\mu_0,\mu_1\in \M_+(S^1)$. It holds
    \begin{enumerate}
        \item\label{it: basicbound} There exists an optimal coupling $\gamma^*\in \Gamma(\mu_0,\mu_1)\cap \calP(\bC\times \bC)$ concentrated on 
        \begin{equation}
            \mathfrak B_R\,\cup\,
            (B^0_R\times\{0\})\cup(\{0\}\times B^0_R),
        \end{equation}
        where $R^2=|\mu_0|(S^1)+|\mu_1|(S^1)$, $B_R:=\{z\in \bC: 0\leq |z|\leq R \}$, $B^0_R:=B_R\setminus \{0\}$, and
    \begin{equation}
        \mathfrak B_R
            :=
            \Bigl\{
                (z,w)\in B^0_R\times B^0_R : 
                \operatorname{dist}\bigl(\theta(z),\theta(w)\bigr)\le \tfrac{\pi}{2}
            \Bigr\}.
    \end{equation}
        \item\label{it: basic3} If $\mu_0,\mu_1$ are weakly admissible, then there exists an optimal coupling $\gamma^*\in \Gamma(\mu_0,\mu_1)\cap \calP(\bC\times \bC)$ concentrated on $\mathfrak B_R$.
        \item\label{it: nar-con} The map $\calP_2(B_R\times B_R)\to \M_+(S^1)$ given by $\gamma\mapsto(\theta_i)_\#(\operatorname{r}_i^2\gamma)$ is narrowly continuous.
    \end{enumerate}
    \begin{proof}
        \cite[Theorem 7.6 + Theorem 7.20]{Liero_2017}. For point \ref{it: nar-con}, the maps $\operatorname{r}_i$ are continuous and bounded in $B_R\times B_R$, making the marginal map narrowly continuous.
    \end{proof}
\end{proposition}
\begin{example}
    Point \ref{it: nar-con} would not be true without the restriction to $B_R\times B_R$. Indeed, The map $\calP_2(\bC\times\bC)\to \M_+(S^1)$ given by $\gamma\mapsto(\theta_i)_\#(\operatorname{r}_i^2\gamma)$ is not narrowly continuous. As a counterexample, let $x,y\in S^1$ and define $\gamma_n=\left(1-\frac{1}{n^2}\right)\delta_{(x, 0)}+ \frac{1}{n^2}\delta_{(x, ny)}$. Then, $\gamma_n\weak \delta_{(x,0)}$ but $(\theta_1)_\#(\operatorname{r}_1^2\gamma_n)=\delta_y$ for all $n$, and $(\theta_1)_\#(\operatorname{r}_1^2\delta_{(x,0)})=0$. An alternative condition would be to ask for uniform bounds on the second moment for the sequence $\gamma_n$.
\end{example}

\subsubsection{Logarithmic entropy transport viewpoint}
We will make use of an alternative formulation of the Wasserstein-Fisher-Rao distance, see \cite[Section 6]{Liero_2017} for more details.
\begin{definition}
    Let $\ell:\R_+\to [0,+\infty]$ be the loss function defined by 
    \begin{equation}
        \ell(\zeta):=\begin{cases}
            -\log(\cos^2(\zeta)) &\zeta\in [0, \pi/2)\\
            +\infty & \zeta\geq \pi/2.
        \end{cases}
    \end{equation}
    Given $\mu_0,\mu_1\in \M_+(S^1)$, we define the forward and reverse logarithmic entropic 
    problems as
    \begin{equation}\label{eq: LETf}
        \calL_F(\mu_0,\mu_1):= \inf\left\{\calF(\mu_0,\mu_1|\eta): \eta\in \calM_+(S^1\times S^1), \eta_i=\sigma_i\mu_i\right\},
    \end{equation}
    \begin{equation}\label{eq: LET}
        \calL_R(\mu_0,\mu_1):= \inf\left\{\calR(\mu_0,\mu_1|\eta): \eta\in \calM_+(S^1\times S^1), \mu_i=\rho_i\eta_i+\mu_i^\perp\right\},
    \end{equation}
    where $\eta_i:=(\operatorname{proj}_i)_\#\eta$, the constraints are in terms of the Lebesgue decomposition of the marginals $\eta_i$ with respect to $\mu_i$, and the energies are given by
    \begin{equation}
        \calF(\mu_0,\mu_1|\eta):=\sum_{i=0}^1\left[\int_{S^1}(\sigma_i\log(\sigma_i)-\sigma_i+1)\dd \mu_i\right] + \int_{S^1\times S^1} \ell(\operatorname{dist(x,y)})\dd\eta(x,y),
    \end{equation}
    \begin{equation}
        \calR(\mu_0,\mu_1|\eta):=\sum_{i=0}^1\left[\mu_i^\perp(S^1) +\int_{S^1}(\rho_i-\log(\rho_i)-1)\dd \eta_i\right] + \int_{S^1\times S^1} \ell(\operatorname{dist(x,y)})\dd\eta(x,y).
    \end{equation}
\end{definition}
 The forward formulation assumes that the marginals $\eta_i$ are absolutely continuous with respect to $\mu_i$, in notation $\eta_i\ll \mu_i$. For such a $\eta$, the densities $\rho_i\in L^1_+(S^1,\eta_i)$ and $\sigma_i\in L^1_+(S^1, \mu_i)$ are related by $\sigma_i\rho_i=1$ whenever they are strictly positive, and are uniquely determined $\eta_i$ almost everywhere \cite[Lemma 2.3]{Liero_2017}.  Moreover, for every Borel subset $I\subset S^1$, it holds
\begin{equation}
    \int_{I} \sigma_i\dd \mu_i^\perp=0.
\end{equation}

\begin{proposition}\label{prop: ex log}
    Let $\mu_0,\mu_1\in \calM_+(S^1)$. It holds
    \begin{enumerate}
        \item\label{it: ex-log} There exists an optimal plan $\eta$ for both $\calL_F$, $\calL_R$, and $\calU^2=\calL_F=\calL_R\leq |\mu_0|(S^1)+|\mu_1|(S^1)$.
        \item\label{it: Kant} Every optimal $\eta^*$ is a solution of the Kantorovich problem between its marginals, i.e.
        \begin{equation}
            \eta^*\in \argmin_{\eta\in \Gamma_b(\eta_0,\eta_1)}\int_{S^1\times S^1} \ell(\operatorname{dist}(x,y))\dd\eta.
        \end{equation}
        \item\label{it: swap} Given an optimal $\eta$ with decompositions $\mu_i=\rho_i\eta_i+\mu_i^\perp$, then there exists an optimal coupling $\gamma$ for $\calU$ such that 
        \begin{equation}
            \gamma=(\sqrt{\rho_0} x, \sqrt{\rho_1}y)_\#\eta +\gamma^\perp,
        \end{equation}
        where $\gamma^\perp\in \Gamma(\mu_0^\perp, \mu_1^\perp)$ is supported on $(\bC\times \{0\})\cup(\{0\}\times \bC)$.
        \item\label{it:annih1} Any optimal $\eta$ with decompositions $\mu_i=\rho_i\eta_i+\mu_i^\perp$, also satisfies
        \begin{equation}
            \mu_i^\perp = \mu_i\mres \{x\in S^1: \operatorname{dist}(x, \supp(\mu_j))\}\geq \pi/2\}. 
        \end{equation}
        \item\label{it: annih2} Given $\mu_i^\perp := \mu_i\mres \{x\in S^1: \operatorname{dist}(x, \supp(\mu_j))\}\geq \pi/2\}, $ it holds
        \begin{equation}
            \calL_R(\mu_0, \mu_1)=\calL_R(\mu_0-\mu^\perp_0, \mu_1-\mu^\perp_1)+|\mu_0^\perp|(S^1)+|\mu_1^\perp|(S^1).
        \end{equation}
    \end{enumerate}
    \begin{proof}
        The existence of a solution in point \ref{it: ex-log} that attains the minimum for both the forward and backward problems is given by \cite[Theorem 6.2b]{Liero_2017}. The equivalence with $\calU$ is proved in \cite[Theorem 7.20]{Liero_2017}. Point \ref{it: Kant} is \cite[Theorem 6.3c]{Liero_2017}. Point \ref{it: swap} follows from \cite[Theorem 7.20(iii)]{Liero_2017} and the construction of the relaxed problem \cite[Lemma 7.9]{Liero_2017}. Point \ref{it:annih1} is \cite[Point a) after Theorem 7.20]{Liero_2017}. Point \ref{it: annih2} is a consequence of the previous point.
    \end{proof}
\end{proposition}

\begin{corollary}\label{cor: weak-adms}
    Let $\mu_0,\mu_1\in \calM_+(S^1)$ be weakly admissible measures. Then there exists an optimal plan $\eta$ for which $\sigma_i:=d\eta_i/d\mu_i\in L^1_+(S^1,\mu_i)$ are strictly positive $\mu_i$-a.e. $x\in S^1$.
    \begin{proof}
        Since the measures are weakly admissible, we can expect $\mu_i^\perp=0$ on point \ref{it:annih1}. Then, $\rho_i\in L^1_+(S^1,\mu_i)$ is strictly positive $\mu_i$ a.e. and so is $\sigma_i$.
    \end{proof}
\end{corollary}
To get additional regularity properties of the densities $\rho_i, \sigma_i$ as proved in \cite{gallouët2024regularitytheorygeometryunbalanced}, we need a stronger admissibility condition on the measures $\mu_0,\mu_1$.
\begin{definition}\label{def: adms}
    Two measures $\mu_0,\mu_1\in \calM_+(S^1)$ are admissible if they satisfy an $\ell$-Hausdorff condition, i.e.
    \begin{equation}\label{eq: ell-Hau}
            \max\left(\sup_{x\in\supp(\mu_0)}\inf_{y\in \supp(\mu_1)} \ell(\operatorname{dist(x,y)}), \sup_{y\in\supp(\mu_1)}\inf_{x\in \supp(\mu_0)} \ell(\operatorname{dist(x,y)})\right)<\infty.
        \end{equation}
\end{definition}
\begin{lemma}\label{lemma: adms2}
    Let $\mu_0,\mu_1\in \calM_+(S^1)$ be admissible measures. Then, they are weakly admissible.
    \begin{proof}
        The proof is straightforward from the definition of $\ell$.
    \end{proof}
\end{lemma}
The opposite implication is not necessarily true. In the case of vanishing first moment measures, it only fails when one of the two measures is a dipole, as we will see in Section \ref{sec: adms}. 

\begin{proposition}\label{prop: annih}
    Let $\mu_0,\mu_1\in \calM_+(S^1)$ be admissible measures. Then there exists an optimal plan $\eta$ for which $\sigma_i:=d\eta_i/d\mu_i\in L^\infty_+(S^1,\mu_i)$ are Lipschitz continuous on the support of $\mu_i$ and $\inf_{s\in \supp\mu_i}\sigma_i(s)>0$.
    \begin{proof}
        \cite[Corollary 8]{gallouët2024regularitytheorygeometryunbalanced} ensures that existence (and uniqueness) of Lipschitz continuous potentials $(u_0,u_1)$. By \cite[Lemma 3]{gallouët2024regularitytheorygeometryunbalanced}, these Lipschitz continuous potentials are optimal for a standard optimal transport problem between $\eta_i=F_i^*(-u_i)\mu_i$, where $F_i$ is the considered entropy and $F_i^*$ is its Legendre-Fenchel transform. In our case, $F_i(x)=x\log(x)-x+1$, and $F_i^*(x)=e^x-1$. Thus, $\sigma_i=e^{-u_i}$ satisfies the properties in the claim, by the Lipschitz continuity of $u_i$, and the compactness of $S^1$.
    \end{proof}
\end{proposition}

\section{The SRVT distance is Wasserstein-Fisher-Rao}\label{sec: equi}
In this section, we prove the equivalence between the square root velocity distance between convex loops and the Wasserstein–Fisher–Rao metric (Theorem \ref{thm: elastic-WFR}). 
To do so, we need three main ingredients. First, we study the admissibility properties of the length measures in order to use the regularity result in Proposition \ref{prop: annih} and Corollary \ref{cor: weak-adms}.
Second, we lift both the curves and their length measures to the universal cover of $S^1$, which allows us to work with periodic densities on $\mathbb R$. In turn, we characterize admissible homogeneous couplings in terms of these lifts and show that optimal couplings satisfy a monotonicity property.
Third, we use this characterization to identify optimal reparametrizations for the SRVT distance and to establish the equivalence with the WFR formulation.

The main idea is the following: let $c_0,c_1\in \mathdiam{\calI}$ be two convex loops, and $\mu_0,\mu_1$ be their length measures. Based on the results in Proposition \ref{prop: ex log}, specifically point \ref{it: Kant}, we can find minimizers of the Wasserstein-Fisher-Rao energy between $\mu_0$ and $\mu_1$, by minimizing the following functional over densities $\sigma_i\in L^1_+(S^1, \mu_i)$ and a Kantorovich problem with marginals $\sigma_i\mu_i$:
\begin{equation}\label{eq: ent-non-deg}
    \overline{\calF}(\sigma_0,\sigma_1):=\sum_{i=0}^1\int_{S^1}\left[1 + \sigma_i\log(\sigma_i)-\sigma_i\right]\dd\mu_i +\inf_{\eta\in \Gamma_b\left(\sigma_0\mu_0, \sigma_1\mu_1\right)}\int_{S^\times S^1} \ell(\operatorname{dist}(x,y))\dd\eta(x,y).
\end{equation}
The existence of minimizers of $\calF$ in $L_+^1(S^1, \mu_i)$ is guaranteed by Proposition \ref{prop: ex log}. When the measures $\mu_i$ are admissible or weakly admissible, minimizers can be taken with $\inf_{s\in \supp(\mu_i)}\sigma_i(s)>0$ by Proposition \ref{prop: annih} or strictly positive $\mu_i$-a.e. $x\in S^1$ by Corollary \ref{cor: weak-adms}. We then study in detail the Kantorovich term in Equation \eqref{eq: ent-non-deg} under such positivity assumptions, using both a pullback construction through the Gauss map and a lifting procedure from $S^1$ to $\R$. This leads to explicit Monge maps obtained by composing cumulative distribution functions and their inverses, that are very much related to an optimal reparametrization for the SRVT distance. Even though the length measures $\mu_i$ could be singular (e.g. if the loops have corners), the pullback construction ensures that the measures used in this last step are absolutely continuous and supported everywhere, so that the mentioned inversion is possible.

\subsection{Admissibility of length measures}\label{sec: adms}
This section is focused on studying when the length measures are admissible or weakly admissible, so that Proposition \ref{prop: annih} or Corollary \ref{cor: weak-adms} applies to them.
\begin{lemma}\label{lemma: non-deg, adm}
    Let $c_0, c_1\in \mathdiam{\calI}$ be non-degenerate convex loops. Their length measures
    $\mu_0,\mu_1\in \calM^0(S^1)$ are admissible.
        \begin{proof}
            Assume by contradiction that 
            \begin{equation}
                \sup_{x\in \supp(\mu_0)}\inf_{y\in \supp(\mu_1)}\ell(\operatorname{dist}(x,y))=\infty.
            \end{equation}
            By compactness of $\supp\mu_0$ and definition of $\ell$, there exists $x\in \supp(\mu_0)$ such that 
            \begin{equation}
                \operatorname{dist}(x, \supp(\mu_1))=\inf_{y\in \supp(\mu_1)}\operatorname{dist}(x,y)\geq \pi/2.
            \end{equation}
            It means that the support $\operatorname{supp}(\mu_1)$ is contained in the closed semicircle
        \begin{equation}
            H_x := \{\, y\in S^1:\ x\cdot y \le 0 \,\}.
        \end{equation}
        In particular, $\mu_1(S^1\setminus H_x)=0.$
        We now use the vanishing first moment condition $\mu_1\in\calM^0(S^1)$ and the property of coordinate-wise integration:
        \begin{equation}
            0 = x\cdot \int_{S^1} y\,\dd\mu_1(y) = \int_{S^1} (x\cdot y)\,\dd\mu_1(y).
        \end{equation}
        On $H_x$ we have $x\cdot y\le 0$, and on $S^1\setminus H_x$ we have $x\cdot y>0$.
        Together with $\mu_1(S^1\setminus H_x)=0$, this yields
        \begin{equation}
            0=\int_{H_x} (x\cdot y)\,\dd\mu_1(y)\le 0.
        \end{equation}
        Therefore equality forces $x\cdot y=0$ for $\mu_1$-a.e.\ $y$, i.e.
        \begin{equation}\label{eq: support_on_equator}
            \operatorname{supp}(\mu_1)\subseteq \{\,y\in S^1:\ x\cdot y=0\,\}.
        \end{equation}
        The right-hand side consists of exactly two antipodal points, which is in contradiction with the assumption that $c_1$ is non-degenerate. The other side of the condition is analogous, using the non degeneracy of $c_0$.
        \end{proof}
\end{lemma}

Admissibility is not automatic when one of the two curves is degenerate:
\begin{lemma}\label{lemma: deg-adms}
    Let $c_0, c_1\in \mathdiam{\calI}$ be two convex loops, and assume that $c_j$ is degenerate of the type $\mu_j=r\delta_{x_j}+r\delta_{-x_j}$. Denote by $\{\pm\;\i x_j\}$ the two antipodal points in $S^1$ orthogonal to $x_j$. 
    Then, $\mu_0,\mu_1$ are admissible if and only if $\{\pm\;\i x_j\}\cap \supp(\mu_i)=\emptyset$.
    \begin{proof}
    Let's first look at 
    \begin{equation}\label{eq: lH1}
        \sup_{x\in \supp(\mu_j)}\inf_{y\in \supp(\mu_i)}\ell(\operatorname{dist}(x,y))=\max\left(\inf_{y\in \supp(\mu_i)}\ell(\operatorname{dist}(x_j,y), \inf_{y\in \supp(\mu_i)}\ell(\operatorname{dist}(-x_j,y)\right).
    \end{equation}
    It is finite if only if $\supp(\mu_i)\cap \mathring{H}_{x_j}\neq \emptyset$ and $\supp(\mu_i)\cap \mathring{H}_{-x_j}\neq \emptyset$, where $\mathring{H}_x=\{y\in S^1: x\cdot y<0\}$. Since $\mu_i\in \calM^0(S^1)$,  $\supp(\mu_i)\cap \mathring{H}_{x_j}\neq \emptyset$ implies $\supp(\mu_i)\cap \mathring{H}_{-x_j}\neq \emptyset$, and vice versa. Therefore, \eqref{eq: lH1} is finite if and only if $\supp(\mu_i)\not\subseteq \{\pm\;\i x_j\}$, which is equivalent to say that $c_i$ is not degenerate in the direction $\i x_j$. Regarding the second Hausdorff term:
    \begin{equation}\label{eq: lH2}
        \sup_{y\in \supp(\mu_i)}\inf_{x\in \supp(\mu_j)}\ell(\operatorname{dist}(x,y))=\sup_{y\in \supp(\mu_i)}\min\left(\ell(\operatorname{dist}(x_j,y), \ell(\operatorname{dist}(-x_j,y)\right).
    \end{equation}
    For both points $\{\pm\;\i x_j\}$, the two terms in the minimization are infinite, but this is the only case.
    Therefore, \eqref{eq: lH2} is finite if only if $\{\pm\;\i x_j\}\cap \supp(\mu_i)=\emptyset$.
    \end{proof}
\end{lemma}
\begin{corollary}\label{cor: deg-adms}
    Let $c_0, c_1\in \mathdiam{\calI}$ be two convex loops, and assume that $c_j$ is degenerate of the type $\mu_j=r\delta_{x_j}+r\delta_{-x_j}$. 
    Then, 
    the length measures $\mu_0,\mu_1$ are weakly admissible if and only if $\mu_i(\{\pm\;\i x_j\})=0$.
    \begin{proof}
        The proof is straightforward and follows similar arguments to Lemma \ref{lemma: non-deg, adm}.
    \end{proof}
\end{corollary}

\begin{example}\label{ex: seg-square}
    Let $c_0$ be a degenerate loop of the form $\mu_0=3\delta_0+3\delta_{\pi}$ and $c_1$ be a square with length measure $\mu_1=\delta_0+\delta_{\pi/2}+\delta_\pi+ \delta_{3\pi/2}$. By the previous discussion, we can expect minimizers of Equation \ref{eq: ent-non-deg} to be
    \begin{equation}
        \sigma_0(s)=\begin{cases}
            3^{-3/4} &s=0,\pi\\
            0& \text{else}
        \end{cases}\qquad \sigma_1(s)=\begin{cases}
            3^{1/4} &s=0,\pi\\
            0& \text{else}
        \end{cases}.
    \end{equation}
    The Kantorovich term in Equation \ref{eq: ent-non-deg} vanishes in this case, and the entropic part splits into the cost $|\mu_1^\perp|(S^1)=2$ of complete creation of mass at $\sigma_1(\pi/2)=\sigma_1(3\pi/2)=0$ and the cost $\approx 0.24$ of removing mass in the directions $0,\pi$. Rotating the direction of degeneracy of $c_0$ slightly, which means having $\mu_0^k=3\delta_{1/k}+3\delta_{\pi+1/k}$ for $k$ large, changes the situation entirely as $\mu_0^k$, $\mu_1$ become (weakly) admissible for all $k$ and Proposition \ref{prop: annih} holds. Since $\mu_0^k$ converges to $\mu_0$ and $\calU$ metrizes the weak topology, we get $\calU(\mu_0^k, \mu_0)\to 0$.
\end{example}

\begin{lemma}\label{lemma: seq-weak-adms}
    Let $c_0, c_1\in \mathdiam{\calI}$ be two convex loops, and assume that $c_j$ is degenerate of the type $\mu_j=r\delta_{x_j}+r\delta_{-x_j}$. There exists a sequence $x_j^k\to x_j$ for $k\to \infty$ defining $\mu_j^{k}:=r\delta_{x_j^k}+r\delta_{-x_j^k}$ such that $\mu_j^{k},\mu_i$ are weakly admissible for all $k$.
    \begin{proof}
        This is a consequence of Corollary \ref{cor: deg-adms}, and the fact that $\mu_i$ can have at most countably many atoms with positive mass. So, we can just use any sequence $x_j^k$ that avoids those points.
    \end{proof}
\end{lemma}
In the setting of the previous Lemma, If $c_1$ is also polygonal, it holds that $\mu_0^k$, $\mu_1$ are admissible for all $k$. 

\subsection{Lift to the universal cover and one-dimensional optimal transport}\label{sec: lift}

In this section, we construct a framework for lifting measures on $S^1$ to the
universal cover $\mathbb R$.
This lifting procedure allows us to replace transport problems on the circle
by one-dimensional transport problems on the real line, for which standard
monotonicity results for optimal transport maps are available (see for example \cite[Sec.~2.1]{Santa}). Throughout this section, convexity plays a crucial role in ensuring the equivalence between the optimal transport problems associated with length
measures and arc-length measures, and, ultimately, in guaranteeing the
existence of optimal reparametrizations for non-degenerate loops.

Let $q:\R\to \R/2\pi\Z\cong S^1$ be the canonical quotient map to the circle.  
Consider the space of $2\pi$-periodic curve lifts
\begin{equation}
    \calP := \big\{\narrowcheck{c}=c\circ q : c\in \calI\big\},
    \qquad 
    \mathdiam{\calP} := \big\{\narrowcheck{c}=c\circ q : c\in \mathdiam{\calI}\big\}.
\end{equation}
Each $\narrowcheck{c}\in \calP$ represents a periodic version of a curve $c\in \calI$.
We also introduce the lifted space of reparametrizations:
\begin{equation}
    \Pi(\R)
    := \Big\{
        \phi\in W^{1,1}_{\loc}(\R):
        \ \phi'(t)>0 ,\ 
        \phi(t)+2\pi=\phi(t+2\pi) \; \text{a.e. on }\R
    \Big\}.
\end{equation}
Elements of $\Pi(\R)$ correspond to orientation-preserving reparametrizations of $\R$ that project to $\Pi(S^1)$ through $q$.

\begin{lemma}\label{lemma: prop-lift}
    Let $c\in \calI$ and $\phi\in \Pi(S^1)$, and choose lifts $\narrowcheck{c}\in \calP$ and $\narrowcheck{\phi}\in \Pi(\R)$. Then:
    \begin{enumerate}
        \item $\narrowcheck{c}\circ \narrowcheck{\phi}$ is a lift of $c\circ \phi$;
        \item $\phi\circ q = q\circ \narrowcheck{\phi}$.
    \end{enumerate}
\end{lemma}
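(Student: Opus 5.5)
Both claims are direct unwindings of the definitions, so I will prove item 2 first and then derive item 1 from it. The only care needed is to make precise what "lift" means for $\phi$: $\narrowcheck{\phi}\in\Pi(\R)$ is a lift of $\phi\in\Pi(S^1)$ when $q\circ\narrowcheck{\phi}=\phi\circ q$. Under that reading, item 2 is simply the defining relation, and the content lies in checking that such lifts exist.

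For existence, I will view $\phi$ as a continuous map $S^1\to S^1$ (its $W^{1,1}$ representative is absolutely continuous). The composition $\phi\circ q:\R\to S^1$ is continuous, and by the path-lifting property of the covering $q$ it admits a continuous lift $\narrowcheck{\phi}:\R\to\R$ with $q\circ\narrowcheck{\phi}=\phi\circ q$. This lift is unique up to adding a constant in $2\pi\Z$.

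Next I check that this $\narrowcheck{\phi}$ lies in $\Pi(\R)$.
\begin{itemize}
\item Since $q$ is a local isometry, $\narrowcheck{\phi}$ is locally absolutely continuous, and $\narrowcheck{\phi}'(t)=\phi'(q(t))>0$ for a.e. $t$.
\item $\narrowcheck{\phi}(t+2\pi)-\narrowcheck{\phi}(t)$ lies in $2\pi\Z$ and is continuous in $t$, so it equals a constant $2\pi k$. Here $k$ is the degree of $\phi$.
\item Because $\phi$ is a homeomorphism with $\phi'>0$, it is orientation preserving of degree one, so $k=1$.
\end{itemize}
The degree step is the only point needing an argument. Strict monotonicity of $\narrowcheck{\phi}$ gives $k\geq 1$, and injectivity of $\phi$ on $S^1$ forbids $k\geq 2$: by the intermediate value theorem, $\narrowcheck{\phi}$ would otherwise hit two values differing by $2\pi$ inside one period.

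With item 2 in hand, item 1 is a one-line computation. By definition, a lift of a curve $c$ is $c\circ q$. Using item 2,
\[
\narrowcheck{c}\circ\narrowcheck{\phi}=c\circ q\circ\narrowcheck{\phi}=c\circ\phi\circ q,
\]
which is exactly the lift of $c\circ\phi$. If one wants to stay at the level of a.e. equivalence classes, note that $\narrowcheck{\phi}$ has a $W^{1,1}_{\loc}$ inverse. Hence composition with it preserves null sets, so the identity holds a.e. regardless of the representative of $c$.

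Translations of $c$ commute with all these compositions, so the statement is compatible with the quotient $\sim$ defining $\calI$. I expect no real obstacle; the degree-one check above is the only step that is not purely formal.
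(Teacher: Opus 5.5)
Your argument is correct. The paper gives no proof of this lemma and treats both items as immediate from the definitions, which is your reduction: item 2 is the defining relation $q\circ\narrowcheck{\phi}=\phi\circ q$, and item 1 follows by composing it with $c$. Your existence and degree-one check adds a detail the paper leaves implicit: every lift of $\phi\in\Pi(S^1)$ satisfies $\narrowcheck{\phi}(t+2\pi)=\narrowcheck{\phi}(t)+2\pi$, so the lifts in $\Pi(\R)$ that the statement presupposes actually exist.
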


In the case of convex loops, the Gauss map admits a well-behaved lift to $\R$.
This property is essential for constructing monotone transport maps between length measures.

\begin{lemma}\label{lemma: convexity}
    Let $c\in \mathdiam{\calI}$ be a convex loop, and let $T_c = c'/|c'|$ denote its Gauss map.  
    Then there exists a unique left continuous and non-decreasing function $\narrowcheck{T}_c:\R\to \R$ such that
    \begin{equation}
        q\circ \narrowcheck{T}_c = T_c\circ q, 
        \qquad 
        \narrowcheck{T}_c(0)\in [0,2\pi),
    \end{equation}
    and $\narrowcheck{T}_c$ has jumps of size at most $\pi$. Moreover, for any $\alpha\in \R$ we have
    \begin{equation}
        q_\#\narrowcheck{\mu}_c^\alpha = \mu_c,
        \qquad 
        \text{where}\qquad
        \narrowcheck{\mu}_c^\alpha :=
        (\narrowcheck{T}_c)_\#(|\narrowcheck{c}'|\dd t\mres [\alpha,\alpha+2\pi)).
    \end{equation}

    \begin{proof}
        The convexity of $c$ and the orientation assumption ensure that its Gauss map $T_c$ is counterclockwise rotationally monotone, which allows the construction of a non-decreasing lift $\narrowcheck{T}_c$ with jumps of size at most $\pi$.  

        For the second claim, let $\narrowcheck{c}$ be a lift of $c$. By construction,
        \begin{equation}
            q_\#(|\narrowcheck{c}'|\dd t\mres [\alpha,\alpha+2\pi))
            = |c'|\dd s.
        \end{equation}
        Combining this with $q\circ \narrowcheck{T}_c=T_c\circ q$ gives
        \begin{equation}
            (q\circ \narrowcheck{T}_c)_\#(|\narrowcheck{c}'|\dd t\mres [\alpha,\alpha+2\pi))
            = (T_c)_\#(|c'|\dd s)
            = \mu_c,
        \end{equation}
        which completes the proof.
    \end{proof}
\end{lemma}

\begin{lemma}\label{lemma: resc}
    Let $\phi\in \Pi(S^1)$ and $c\in \mathdiam{\calI}$. For all $\alpha\in \R$, there is some $\beta\in \R$ such that $\narrowcheck{\mu}_c^\alpha = \narrowcheck{\mu}_{c\circ \phi}^\beta$.

\begin{proof}  
    On the circle, we have $\mu_c=\mu_{c\circ \phi}$; however, their lifts to $\R$ require more care.  
    Let $\narrowcheck{\mu}_c^\alpha=(\narrowcheck{T}_c)_\#(|\narrowcheck{c}'|\dd t\mres [\alpha,\alpha+2\pi))$ be a lift of $\mu_c$ as in Lemma~\ref{lemma: convexity}, and let $\narrowcheck{\phi}\in \Pi(\R)$ be a lift of $\phi$.  
    Using part~(2) of Lemma~\ref{lemma: prop-lift}, we have
    \begin{equation}
        q\circ \narrowcheck{T}_{c\circ \phi}
        = T_{c\circ\phi}\circ q
        = T_c\circ \phi\circ q
        = T_c\circ q\circ \narrowcheck{\phi}
        = q\circ \narrowcheck{T}_c\circ \narrowcheck{\phi}.
    \end{equation}
    Since $\narrowcheck{T}_c$ and $\narrowcheck{T}_{c\circ\phi}$ have jumps of size at most $\pi$ and $\narrowcheck{\phi}$ is continuous, we conclude that
    \begin{equation}
        \narrowcheck{T}_{c\circ\phi} = \narrowcheck{T}_c\circ \narrowcheck{\phi}.
    \end{equation}
    Setting $\beta=\narrowcheck{\phi}(\alpha)$, it follows that the lifted measures satisfy $\narrowcheck{\mu}_c^\alpha = \narrowcheck{\mu}_{c\circ \phi}^\beta$.
\end{proof}
\end{lemma}

\begin{lemma}\label{lemma: rhomu}
Let $c\in \mathdiam{\calI}$ and let $\sigma\in L^1_+(S^1, \mu_c)$.   
    Then $\narrowcheck{\psi}:=\narrowcheck{\sigma}\circ \narrowcheck{T}_c\in L^1_{\loc}(\R)$ is $2\pi$-periodic and a lift of $\psi:=\sigma\circ T_c$, and for every $\alpha\in \R$, it holds
    \begin{equation}
        q_\#\narrowcheck{\eta}_c^\alpha = \sigma\,\mu_c,
        \qquad
        \text{where}\qquad
        \narrowcheck{\eta}_c^\alpha := (\narrowcheck{T}_c)_\#(\narrowcheck{\nu}_c^\alpha), \qquad \narrowcheck{\nu}_c^\alpha:=\narrowcheck{\psi}|\narrowcheck{c}'|\dd t\mres [\alpha,\alpha+2\pi)
    \end{equation}

    \begin{proof} 
        By construction,
        \begin{equation}
            q_\#(\narrowcheck{\psi}|\narrowcheck{c}'|\dd t\mres [\alpha,\alpha+2\pi))
            = \psi|c'|\dd s,
        \end{equation}
        and using $q\circ \narrowcheck{T}_c = T_c\circ q$ yields the claim.
    \end{proof}
\end{lemma}

\begin{lemma}\label{lemma: equivalence Gauss}
    Let $c_0,c_1\in \mathdiam{\calI}$ be convex loops with length measures $\mu_0,\mu_1$, and $\sigma_i\in L_+^1(S^1, \mu_i)$ be two Borel functions defining probability measures $\sigma_i\mu_i$. For any coupling $\eta\in \Gamma_b(\sigma_0\mu_0,\sigma_1\mu_1)$, there exists a coupling $\nu\in \Gamma_b(\psi_0|c'_0|\dd s, \psi_1|c'_1|\dd s)$ such that 
    \begin{equation}
        \eta=(T_{c_0}\times T_{c_1})_\#\nu.
    \end{equation}
    \begin{proof}
        For $\alpha_i\in \R$, consider the measures $\narrowcheck{\eta}_i^{\alpha_i}$ and $\narrowcheck{\nu}_i^{\alpha_i}$ of Lemma \ref{lemma: rhomu}. We first prove that, for any coupling $\narrowcheck{\eta}\in \Gamma_b(\narrowcheck{\eta}_0^{\alpha_0},\narrowcheck{\eta}_1^{\alpha_1})$, there exists a coupling $\narrowcheck{\nu}\in \Gamma_b(\narrowcheck{\nu}_0^{\alpha_0}, \narrowcheck{\nu}_1^{\alpha_1})$ such that 
    \begin{equation}
        \narrowcheck{\eta}=(\narrowcheck{T}_{c_0}\times \narrowcheck{T}_{c_1})_\#\narrowcheck{\nu}.
    \end{equation}
        Note that $\narrowcheck{\eta}_i^{\alpha_i}$ can only have atoms when there is an interval on which $\narrowcheck{T}_{c_i}$ is constant. So if both $\narrowcheck{\eta}_i^{\alpha_i}$ are atomless, then $\narrowcheck{T}_{c_i}$ are strictly increasing, hence injective, and we can define $\narrowcheck{\nu}(A\times B):=\narrowcheck{\eta}(\narrowcheck{T}_{c_0}(A)\times \narrowcheck{T}_{c_1}(B))$ for $A,B \subset \R$ arbitrary Borel sets, a premeasure on the algebra generated by rectangles, which extends uniquely to a Borel measure on $\R^2$ by the Hahn-Kolmogorov extension theorem.
        Assume that $\narrowcheck{\eta}_0^{\alpha_0}$ has a countable set of atoms $\{t_i\}_{i\in \N}$ with masses $\{m_i\}_{i\in\N}$. We define the conditional probability measure on $\R$
        \begin{equation}
            \gamma(\cdot|t_i):=\frac{1}{m_i}\narrowcheck{\eta}(\{t_i\}\times \cdot),
        \end{equation}
        and the residue measure
        \begin{equation}
            \overline{\gamma}(A\times B):=\narrowcheck{\eta}(A\times B)-\sum_{i}m_i\delta_{t_i}(A)\gamma(B|t_i).
        \end{equation}
        Finally, we define the transport plan $\kappa$ between $\narrowcheck{\nu}_0^{\alpha_0}$ and $\narrowcheck{\eta}_1^{\alpha_1}$ using the same extension procedure on
        \begin{equation}
            \kappa(C\times B):=\sum_{i}\narrowcheck{\nu}_0^{\alpha_0}(C\cap \narrowcheck{T}_{c_0}^{-1}(\{t_i\}))\gamma(B|t_i) + \overline{\gamma}(\narrowcheck{T}_{c_0}(C)\times B).
        \end{equation}
        By construction, $\kappa(\narrowcheck{T}_{c_0}^{-1}(A)\times B)= \narrowcheck{\eta}(A\times B)$. Indeed, 
        \begin{equation}
            \overline{\gamma}(\narrowcheck{T}_{c_0}(\narrowcheck{T}^{-1}_{c_0}(A))\times B)=\narrowcheck{\eta}(A\times B)-\sum_i m_i\delta_{t_i}(A)\gamma(B|t_i),
        \end{equation}
        where $\narrowcheck{\eta}(\narrowcheck{T}_{c_0}(\narrowcheck{T}^{-1}_{c_0}(A))\times B)=\narrowcheck{\eta}(A\times B)$ because $\eta$ is not supported on $(A\setminus \narrowcheck{T}_{c_0}(\narrowcheck{T}^{-1}_{c_0}(A)))\times B$ by the marginal constraints, and $\delta_{t_i}(A)=\delta_{t_i}(\narrowcheck{T}_{c_0}(\narrowcheck{T}^{-1}_{c_0}(A)))$ for a similar reason since $t_i$ are in the support of $\narrowcheck{\eta}_0^{\alpha_0}$. 
        Together with 
        \begin{equation}
            \sum_{i}\narrowcheck{\nu}_0^{\alpha_0}(\narrowcheck{T}_{c_0}^{-1}(A\cap \{t_i\}))\gamma(B|t_i)=\sum_{i}\narrowcheck{\eta}_0^{\alpha_0}(A\cap \{t_i\})\gamma(B|t_i)= \sum_{i}m_i\delta_{t_i}(A)\gamma(B|t_i), 
        \end{equation}
        we prove the equality $\kappa(\narrowcheck{T}_{c_0}^{-1}(A)\times B)= \narrowcheck{\eta}(A\times B)$. 
        By an analogous construction on the second argument and by uniqueness of the extensions, we obtain the claim.

        Now note that every $\eta\in \Gamma_b(\sigma_0\mu_0,\sigma_1\mu_1)$ is of the type $(q\times q)_\#\narrowcheck{\eta}$ for some $\narrowcheck{\eta}\in \Gamma(\narrowcheck{\eta}_0^{\alpha_0},\narrowcheck{\eta}_1^{\alpha_1})$, which concludes the proof.
    \end{proof}
\end{lemma}

The previous Lemma ensures that the map 
\begin{equation}
    (T_{c_0}\times T_{c_1})_\#:\Gamma_b(\psi_0|c'_0|\dd s, \psi_1|c'_1|\dd s)\to \Gamma_b(\sigma_0\mu_0,\sigma_1\mu_1)
\end{equation}
is surjective. Therefore, Kantorovich problems between the measures $\sigma_0\mu_0$ and $\sigma_1\mu_1$ can be translated in terms of the measures $\nu_0:=\psi_0|c'_0|\dd s$ and $\nu_1:=\psi_1|c'_1|\dd s$.
Consider the Kantorovich part of the entropic formulation in Equation \eqref{eq: ent-non-deg}, and let $c(x,y):=\ell(\operatorname{dist}(x,y))$ the cost function on the circle. One way to lift it to the universal cover is the following: for $t,s\in \R$,
\begin{equation}
    c(q(t), q(s))=\inf_{k\in \Z} \narrowcheck{c}(t,s +2\pi k), \qquad \narrowcheck{c}(t,s):=\ell(|t-s|).
\end{equation}
The infimum is attained since $\narrowcheck{c}$ is invariant to $2\pi k$ shifts (i.e. $\narrowcheck{c}(t+2\pi k, s+2\pi k)=\narrowcheck{c}(t,s)$), and it grows uniformly as $|t-s|\to \infty$.
In turn, we get the equivalences,
\begin{equation}
    c(T_{c_0}(q(t)),T_{c_1}(q(s)))=\inf_{k\in \Z} \narrowcheck{c}(\narrowcheck{T}_{c_0}(t), \narrowcheck{T}_{c_1}(s+2\pi k)).
\end{equation}
Denote by $c_T, \narrowcheck{c}_T$ the compositions $c(T_{c_0}, T_{c_1}), \narrowcheck{c}(\narrowcheck{T}_{c_0}, \narrowcheck{T}_{c_1})$ respectively. The cost function $\narrowcheck{c}_T$ on $\R\times \R$ satisfies the following properties:
\begin{enumerate}
    \item It is lower semicontinuous, because of the lower semicontinuity of $\ell(|t-s|)$ and $\narrowcheck{T}_{c_i}$, which are non-decreasing and left-continuous;
    \item \label{it: growth} It satisfies a growth condition, for any $\lambda\in \R$ there exists $R(\lambda) >0$ such that $\narrowcheck{c}_T(t,s)\geq R(\lambda)$ whenever $|t-s|\geq \lambda$;
    \item It is invariant with respect to $2\pi$ shifts, i.e. $\narrowcheck{c}_T(t+2\pi, s+2\pi)=\narrowcheck{c}_T(t,s)$;
    \item \label{it: monge} It satisfies a (non-strict) Monge condition, i.e. for $s_0\leq s_1$ and $t_0\leq t_1$, it holds
    \begin{equation}
        \narrowcheck{c}_T(t_0,s_0)+\narrowcheck{c}_T(t_1,s_1)\leq \narrowcheck{c}_T(t_0, s_1)+\narrowcheck{c}_T(t_1, s_0).
    \end{equation}
\end{enumerate}
These four properties are in line with the assumptions made in \cite{Delon_2010}, where the authors proved a monotonicity result of transport plans in the circle which is consistent with the well-known monotonicity results in one-dimensional optimal transport \cite[Chapter 2]{Santa}.  
The main differences are that their cost is $\R$-valued, and, in property \ref{it: monge}, where the authors assume a strict inequality to prove that all the minimizers are induced by a monotone optimal plan in the universal cover. Nevertheless, their approach using local modifications of the plans to force monotonicity without increasing the cost is still valid under weaker assumptions, see Appendix \ref{sec: app1}.

\begin{definition}
    Given a $2\pi$ periodic measure $\narrowcheck{\zeta}$ on $\R$ that is normalized to unit mass over each period, we define its cumulative distribution function $F_{\widecheck{\zeta}}$ centered at $0$ as
\begin{equation}
    F_{\widecheck{\zeta}}(t)=\begin{cases}
        \narrowcheck{\zeta}([0, t)) &t\geq 0;\\
        -\narrowcheck{\zeta}((t, 0]) &t<0.
    \end{cases}
\end{equation}
\end{definition}
This function is always non decreasing and right continuous. Whenever $\narrowcheck{\zeta}$ is atomless, $F_{\widecheck{\zeta}}:\R\to \R$ is continuous and surjective. When $\narrowcheck{\zeta}$ is globally supported, then $F_{\widecheck{\zeta}}:\R\to \R$ is injective.
Recall that the pseudo-inverse of $F_{\widecheck{\zeta}}$ is defined by
\begin{equation}
    F_{\widecheck{\zeta}}^{[-1]}(t):=\inf \{s\in \R: F_{\widecheck{\zeta}}(s)> t\}.
\end{equation}
This map is non decreasing and left continuous.
Another property of $F_{\widecheck{\zeta}}^{[-1]}$ is that it transports the Lebesgue measure $\dd t$ to $\narrowcheck{\zeta}$, i.e. $\left(F_{\widecheck{\zeta}}^{[-1]}\right)_\#\dd t=\narrowcheck{\zeta}$. 

\begin{lemma}\label{lemma: Delon}
    Let $c_0,c_1\in \mathdiam{\calI}$ be convex loops with length measures $\mu_0,\mu_1$, and $\sigma_i\in L^1_+(S^1,\mu_i)$ be two functions defining probability measures $\sigma_i\mu_i$. 
    Consider the measures $\nu_i:= \psi_i |c_i'|\dd s$ with $\psi_i:=\sigma_i\circ T_{c_i}$, and the lifts $\narrowcheck{\nu}_i=\narrowcheck{\psi}_i|\narrowcheck{c}_i'|\dd t$. Assume that 
    \begin{equation}\label{eq: inf}
        \inf_{\nu\in \Gamma_b(\nu_0, \nu_1)}\int_{S^1\times S^1} c_T(x,y)\dd \nu<+\infty.
    \end{equation}
    
    There exists $\theta\in \R$ and $\narrowcheck{\nu}_\theta\in \Gamma_b(\narrowcheck{\nu}_0, \narrowcheck{\nu}_1)$ such that 
    \begin{equation}\label{eq: nutheta1}
        \narrowcheck{\nu}_{\theta}(A\times B)=\narrowcheck{\zeta}^\theta(F_{\widecheck{\nu}_0}(A)\times F_{\widecheck{\nu}_1}(B)), \qquad \narrowcheck{\zeta}^\theta:=(\Id\times (\Id+\theta))_\#\dd t,
    \end{equation}
    and projects to a minimizer of \eqref{eq: inf}.
    \begin{proof}
        Let $\nu^*$ be a minimizer of \eqref{eq: inf}, and let $\narrowcheck{\nu}^*\in \Gamma_b(\narrowcheck{\nu}_0,\narrowcheck{\nu}_1)$ be a lift supported on the closure of
        \begin{equation}
              S:=\{(t,s)\in \R\times \R: c_T(q(t), q(s))=\narrowcheck{c}_T(t,s)<\infty\}.
         \end{equation}
         This can be done because \eqref{eq: inf} is finite, so we can choose a representer of $\nu^*$ supported on $\operatorname{cl}S\cap[0,2\pi]^2$ and expand it by periodicity. Then, by construction,
        \begin{equation}
            \int_{S^1\times S^1}c_T(x,y)\dd \nu^*=\int_{[0,2\pi)^2} \narrowcheck{c}_T(t,s)\dd\narrowcheck{\nu}^*<\infty.
        \end{equation}
        Now by \cite[Lemma 4.4]{Delon_2010} there exists $\narrowcheck{\zeta}^*\in \Gamma_b(\dd t,\dd t)$ such that $\narrowcheck{\nu}^*(A\times B)=\narrowcheck{\zeta}^*(F_{\widecheck{\nu}_0}(A)\times F_{\widecheck{\nu}_1}(B))$. 
        By \cite[Lemma 4.14]{Delon_2010}, $\narrowcheck{\zeta}^*$ is cost equivalent to a shift measure $\narrowcheck{\zeta}^\theta:=(\Id\times (\Id+\theta))_\#\dd t$. For completeness, we prove in Appendix \ref{sec: app1} that such a statement is still true under our weaker assumptions.
    \end{proof}
    
\end{lemma}
When $F_{\widecheck{\nu}_0}$ and $F_{\widecheck{\nu}_1}$ are homeomorphisms, we can equivalently say that
\begin{equation}
        \nu^*:=(q\times q)_\#\narrowcheck\nu_\theta, \qquad \narrowcheck{\nu}_{\theta}:=(F_{\widecheck{\nu}_0}^{-1}\times (F_{\widecheck{\nu}_1}-\theta)^{-1})_\#(\dd t\mres [0,1]),
    \end{equation}
is a minimizer of \eqref{eq: inf}. It is the case in the next result.

\begin{proposition}\label{pro: final-eq}
    Let $c_0,c_1\in \mathdiam{\calI}$ be convex loops with length measures $\mu_0,\mu_1$, and $\sigma_i\in L_+^1(S^1,\mu_i)$ be two $\mu_i$-almost everywhere strictly positive functions, defining probability measures $\sigma_i\mu_i$. Consider the measures $\nu_i:=\psi_i|c_i'|\dd s\in \calM(S^1)$ with $\psi_i:=\sigma_i\circ T_{c_i}$. Assume 
    \begin{equation}\label{eq: wass}
        \inf_{\eta\in \Gamma_b(\sigma_0\mu_0, \sigma_1\mu_1)}\int_{S^1\times S^1} c(x,y)\dd \eta<+\infty.
    \end{equation}
    There exists a reparametrization $\phi\in \Pi(S^1)$ such that 
    \begin{equation}
        \eta:=(T_{c_0}\times T_{c_1\circ \phi})_\#\nu_0
    \end{equation}
    is a minimizer of \eqref{eq: wass}.
    Moreover it holds
    \begin{equation}\label{eq: phi-push}
        \psi_0(x)|c_0'(x)|= \psi_1(\phi(x)) |(c_1\circ \phi)'(x)| \qquad a.e. \quad x\in S^1.
    \end{equation}
    \begin{proof}
        By Lemma \ref{lemma: equivalence Gauss}, we can construct a minimizer of \eqref{eq: wass} from a minimizer of \eqref{eq: inf}.
        By Lemma \ref{lemma: Delon}, we have that 
        \begin{equation}
        \nu^*:=(q\times q)_\#\narrowcheck\nu_\theta, \qquad \narrowcheck{\nu}_{\theta}:=(F_{\widecheck{\nu}_0}^{-1}\times (F_{\widecheck{\nu}_1}-\theta)^{-1})_\#(\dd t\mres [0,1]),
    \end{equation}
        is a minimizer of \eqref{eq: inf}. Denote by 
        \begin{equation}\label{eq: phitheta}
            \narrowcheck{\phi}_\theta:=(F_{\widecheck{\nu}_1}+\theta)^{-1}\circ F_{\widecheck{\nu}_0}:\R\to \R
        \end{equation} 
        the monotone transport between the CDFs. Since $\narrowcheck{\nu}_i$ are absolutely continuous w.r.t Lebesgue and supported everywhere in $\R$, we get that $\narrowcheck{\phi}_\theta$ is a homeomorphism. By construction, the densities $|\narrowcheck{c}_i'|>0$ are bounded and $\psi_i\in L^1_+(S^1)$ are strictly positive almost everywhere. This gives that $F_{\widecheck{\nu}_i}\in W^{1,1}_{\loc}(\R)$ with $F'_{\widecheck{\nu}_i}>0$ a.e. $t\in \R$. The same holds for the inverse, and we obtain that $\narrowcheck{\phi}_\theta\in \Pi(\R)$. 
        Then, since $(F_{\widecheck{\nu}_0})_\#\narrowcheck{\nu}_0=\dd t$, we get that the measure $\narrowcheck{\nu}_\theta$ in Lemma \ref{lemma: Delon} can be equivalently described as
        \begin{equation}
            \narrowcheck{\nu}_\theta=(\Id\times \narrowcheck{\phi}_\theta)_\#\narrowcheck{\nu}_0. 
        \end{equation}
        Let $\phi\in \Pi(S^1)$ be the reparametrization induced by $\narrowcheck{\phi}_{\theta}$. Then, by the marginal constraints of $\nu^*$, we get that $\phi_\#\nu_0=\nu_1$, which corresponds to Equation \eqref{eq: phi-push}.
    \end{proof}
\end{proposition}

\begin{example}
    The assumption on the strict positivity of $\sigma_i$ is essential in the previous construction. As an example, 
    continuing on the setup of Example \ref{ex: seg-square}, recall that $\sigma_0\mu_0=\sigma_1\mu_1$. For convenience, assume that $c_0,c_1$ are constant speed parametrizations, which means that $\nu_0= \dd s$ and $\nu_1=\psi_1\dd s$. The density $\psi_1$ is non-zero and constant on the intervals $(0,\pi/2)$ and $(\pi,3\pi/2)$. Then, the map $\phi_\theta$ in Equation \eqref{eq: phitheta} maps, as expected, the first half of the circle to the first quarter and the second half to the third quarter. Therefore, it has jump discontinuity and an optimal reparametrization does not exist.
\end{example}

\subsection{Equivalence result}\label{sec: equiproof}
We are now in a position to combine the previous results. The lifting procedure, the characterization of optimal couplings of the lifted one-dimensional problem, and the monotonicity principle together show that optimal WFR couplings induce optimal SRVT reparametrizations, and conversely.
This leads to the following equivalence theorem.

\begin{theorem}\label{thm: elastic-WFR}
    Let $c_0, c_1\in \mathdiam{\calI}$, and let $\mu_i:=\mu_{c_i}$ be the corresponding length measures. Then, it holds
    \begin{equation}
        d_{\operatorname{SRVT}}(c_0,c_1)=\calU(\mu_0,\mu_1).
    \end{equation}
    \begin{proof}
        Explicitly, we want to show that
        \begin{equation}
            \inf_{\phi\in \Pi(S^1)}\int_{S^1}\left|\frac{c_0'}{\sqrt{|c_0'|}}- \frac{(c_1\circ \phi)'}{\sqrt{|(c_1\circ \phi)'|}}\right|^2 \dd s = \inf_{\gamma}\int_{\bC\times \bC} \left|x- y\right|^2\dd \gamma.
        \end{equation}
        To prove the first inequality ($\geq$), we construct a coupling $\gamma_\phi$ from a given reparametrization $\phi$ as the pushforward of $\dd s$ through the map $\Phi: S^1\to \bC\times\bC$ defined by 
        \begin{equation}
            \Phi:=\left(\frac{c_0'}{\sqrt{|c_0'|}}, \frac{(c_1\circ \phi)'}{\sqrt{|(c_1\circ \phi)'|}}\right).
        \end{equation}
        For any Borel set $U\subset S^1$, the second marginal constraint of  $\gamma_\phi:=\Phi_\#(\dd s)$ becomes
        \begin{equation}
            (\theta_1)_\#(\operatorname{r}_1^2\gamma_\phi)(U)=\int_{T_{c_1\circ\phi}^{-1}(U)} |(c_1\circ \phi)'|\;\dd t = \mu_1(U).
        \end{equation}
        Similarly the first marginal constraints gives $(\theta_0)_\#(\operatorname{r}_0^2\gamma_\phi)=\mu_0$.
        Therefore,
        \begin{equation}
            \int_{S^1}\left|\frac{c_0'}{\sqrt{|c_0'|}}- \frac{(c_1\circ \phi)'}{\sqrt{|(c_1\circ \phi)'|}}\right|^2 \dd s = \int_{\bC\times \bC} \left|x- y\right|^2\dd\gamma_\phi \geq \calU^2(\mu_0, \mu_1).
        \end{equation}

        To prove the second inequality ($\leq$), assume that $\mu_0$ and $\mu_1$ are weakly admissible in the sense of Definition \ref{def: weak-adms}. This assumption is satisfied when $c_0$ and $c_1$ are non-degenerate convex loops by Lemma \ref{lemma: non-deg, adm}, or one of the loops $c_i$ is degenerate in the direction $x$, but $\mu_j(\{\pm\;\i x\})=0$ by Corollary \ref{cor: deg-adms}.
        Consider the entropic formulation of Equation \eqref{eq: ent-non-deg}, defined in terms of the densities $\sigma_i\in L^1(S^1, \R_+)$ strictly positive $\mu_i$-almost everywhere, see Corollary \ref{cor: weak-adms}. Up to rescaling the measures $\sigma_i\mu_i$, we can use Proposition \ref{pro: final-eq} to get a reparametrization $\phi\in \Pi(S^1)$ such that
        \begin{equation}
            \eta^*:=(T_{c_0}\times T_{c_1\circ \phi})_\#(\psi_0|c_0'|\dd s).
        \end{equation}
        is an optimal coupling for
        \begin{equation}
            \inf_{\eta\in \Gamma_b(\sigma_0\mu_0, \sigma_1\mu_1)}\int_{S^1\times S^1}\ell(\operatorname{dist}(x,y))\dd\eta.
        \end{equation}
        Let $\sigma_0,\sigma_1$ be optimal for $\overline{\calF}$. By Proposition \ref{prop: ex log}, we get that 
        \begin{equation}
            \gamma^*:=\left(\frac{1}{\sqrt{\sigma_0}}\;T_{c_0}\times\frac{1}{\sqrt{\sigma_1}}\;T_{c_1\circ \phi}\right)_\#(\psi_0|c_0'|\;ds)
        \end{equation}
        is optimal for $\calU(\mu_0,\mu_1)$.
        Therefore,
        \begin{align}
            \int_{\bC\times \bC} |x-y|^2\dd\gamma^*&=\int_{S^1}\left|\frac{1}{\sqrt{\psi_0}}T_{c_0}-\frac{1}{\sqrt{\psi_1\circ \phi}}T_{c_1\circ \phi}\right|^2\psi_0|c_0'|\dd s\\
            &=\int_{S^1} \left|\sqrt{|c_0'|}T_{c_0}-\sqrt{\frac{\psi_0|c_0'|}{\psi_1\circ \phi}}T_{c_1\circ \phi}\right|^2\dd s,\\
            &=\int_{S^1} \left|\frac{c_0'}{\sqrt{|c_0'|}}-\frac{(c_1\circ \phi)'}{\sqrt{|(c_1\circ \phi)'|}}\right|^2\dd s
        \end{align}
        where we used the equivalence in Equation \eqref{eq: phi-push}. 
        
        Assume now that $c_0$ and $c_1$ are both degenerate convex loops that are orthogonal to each other. By point \ref{it: annih2} of Proposition \ref{prop: ex log}, the Wasserstein-Fisher-Rao distance between $\mu_0,\mu_1$ is
        \begin{equation}
            \calU^2(\mu_0,\mu_1)=|\mu_0|(S^1)+|\mu_1|(S^1).
        \end{equation}
        Moreover, by construction $c_0'\cdot c_1'=0$, which can be used to say
        \begin{equation}
            \int_{S^1}\left|\frac{c_0'}{\sqrt{|c_0'|}}- \frac{c_1'}{\sqrt{|c_1'|}}\right|^2 \dd s = \int_{S^1}|c_0'|\dd s+ \int_{S^1}|c_1'|\dd s= |\mu_0|(S^1)+|\mu_1|(S^1).
        \end{equation}
        In particular, this concludes the proof because $\Id\in \Pi(S^1)$ and thus
        \begin{equation}
            d^2_{\operatorname{SRVT}}(c_0,c_1)\leq |\mu_0|(S^1)+|\mu_1|(S^1)=\calU^2(\mu_0,\mu_1).
        \end{equation}
        
        For the last case, assume without loss of generality that $c_0$ is a degenerate loop, with $\mu_0=r_0\delta_{x_0}+r_0\delta_{-x_0}$ and assume that $c_1$ is non-degenerate with $\mu_1(\{\pm\;\i x_0\})>0$. 
        Let $\mu_0^k$ be a sequence of measures converging to $\mu_0$ defined in Lemma \ref{lemma: seq-weak-adms}, and $c_0^k\in \mathdiam{\calI}$ be a sequence of representatives from Theorem \ref{thm: one-to-one}.
        By construction, $\mu_0^k$, $\mu_1$ and $\mu_0^k, \mu_0$ are weakly admissible for all $k$, and we already proved the equivalence. So, by triangular inequality 
        \begin{equation}\label{eq: final-boss}
            d_{\operatorname{SRVT}}(c_0,c_1)\leq d_{\operatorname{SRVT}}(c_0,c_0^k)+d_{\operatorname{SRVT}}(c_0^k,c_1)\leq 2\calU(\mu_0^k,\mu_0)+\calU(\mu_0,\mu_1)
        \end{equation}
        Since $\mu_0^k\weak \mu_0$, and $|\mu_0^k|(S^1)=|\mu_0|(S^1)$, we also get $\calU(\mu_0^k,\mu_0)\to 0$, because $\calU$ metrizes the weak topology \cite[Theorem 7.15]{Liero_2017}. This implies the desired inequality in \eqref{eq: final-boss}.         
    \end{proof}
\end{theorem}
\begin{corollary}
    Let $c_0, c_1\in \mathdiam{\calI}$ be convex loops with weakly admissible length measures $\mu_0,\mu_1\in \calM^0(S^1)$. Then, there exists an optimal reparametrization $\phi\in \Pi(S^1)$ for $d_{\operatorname{SRVT}}(c_0,c_1)$.
\end{corollary}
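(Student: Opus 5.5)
The plan is to read the corollary off the second half of the proof of Theorem \ref{thm: elastic-WFR}, noting that under weak admissibility the reparametrization built there is itself admissible and attains the infimum. First, Corollary \ref{cor: weak-adms} gives an optimal plan $\eta$ for $\calL_F=\calL_R=\calU^2$ (Proposition \ref{prop: ex log}) whose densities $\sigma_i=\dd\eta_i/\dd\mu_i$ are strictly positive $\mu_i$-a.e. Since $\eta$ is optimal, $\calL_F(\mu_0,\mu_1)<\infty$. By point \ref{it: Kant} of Proposition \ref{prop: ex log}, $\eta$ solves the Kantorovich problem between $\sigma_0\mu_0$ and $\sigma_1\mu_1$, so the transport cost in \eqref{eq: wass} is finite. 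We normalize so that $\sigma_i\mu_i$ are probability measures; both marginals have the same mass, so this amounts to dividing by a common constant, and the ratio $\psi_0/(\psi_1\circ\phi)$ used below does not change.

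Second, we apply Proposition \ref{pro: final-eq} to these $\sigma_i$. It yields $\phi\in\Pi(S^1)$ such that $(T_{c_0}\times T_{c_1\circ\phi})_\#(\psi_0|c_0'|\dd s)$ is an optimal Kantorovich coupling and the pushforward identity \eqref{eq: phi-push} holds. The Kantorovich cost depends on $\eta$ only through its marginals $\sigma_i\mu_i$, so this new coupling, taken together with the same densities $\sigma_i$, is again optimal for $\overline{\calF}$. Point \ref{it: swap} of Proposition \ref{prop: ex log}, with $\mu_i^\perp=0$ by weak admissibility and point \ref{it:annih1}, then produces the homogeneous coupling $\gamma^*$ written in the proof of Theorem \ref{thm: elastic-WFR}, and $\gamma^*$ is optimal for $\calU$.

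Third, we run the same chain of equalities as in that proof, using \eqref{eq: phi-push}. It gives
\begin{equation}
\int_{S^1}\left|\Phi(c_0)-\Phi(c_1\circ\phi)\right|^2\dd s=J(\gamma^*)=\calU^2(\mu_0,\mu_1).
\end{equation}
Theorem \ref{thm: elastic-WFR} states $\calU(\mu_0,\mu_1)=d_{\operatorname{SRVT}}(c_0,c_1)$, so $\phi$ attains the infimum defining $d_{\operatorname{SRVT}}$ and is an optimal reparametrization.

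The main obstacle is to check that $\phi$ genuinely lies in $\Pi(S^1)$, and not merely in the closure of $\Pi(S^1)$. This requires the CDFs $F_{\widecheck\nu_i}$ to be homeomorphisms lying in $W^{1,1}_{\loc}$ with a.e. positive derivative, which in turn rests on $\psi_i=\sigma_i\circ T_{c_i}>0$ almost everywhere with respect to $|c_i'|\dd s$. That positivity follows from $\sigma_i>0$ $\mu_i$-a.e.: indeed $\mu_i=(T_{c_i})_\#(|c_i'|\dd s)$, so a null set for $\mu_i$ pulls back under $T_{c_i}$ to a null set for $|c_i'|\dd s$. I would verify this pullback carefully, since it is exactly the point that fails in the degenerate, non-weakly-admissible example.
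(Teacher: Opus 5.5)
Your proposal is correct and is essentially the paper's argument. The paper gives no separate proof: the corollary is read off the weakly admissible case in the proof of Theorem \ref{thm: elastic-WFR}, where Corollary \ref{cor: weak-adms} and Proposition \ref{pro: final-eq} produce $\phi\in\Pi(S^1)$ whose SRVT energy equals $J(\gamma^*)=\calU^2(\mu_0,\mu_1)=d^2_{\operatorname{SRVT}}(c_0,c_1)$, exactly as you do. Your extra details are accurate and fill in steps the paper leaves implicit: the common normalization of the $\sigma_i\mu_i$, the optimality of the new plan for $\overline{\calF}$ (same marginals and optimal transport cost), and the pullback of $\mu_i$-null sets under $T_{c_i}$ giving $\psi_i>0$ almost everywhere.
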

The previous Corollary applies in particular to all pairs of non-degenerate convex loops.

\section{Towards optimization over length measures}\label{sec: opti}
As mentioned in the introduction, the main motivation for our results in the previous section is towards applying techniques from optimization over spaces of measures to geometric problems formulated over convex loops.

\subsection{Linear optimization over WFR balls}\label{sec: linearopt}
Following \cite{YueKuWie}, we prove a finiteness result for solutions of linear optimization problems constrained to Wasserstein-Fisher-Rao balls centered on a finite measure. We restricted the study to $S^1$ as base space for consistency with the rest of the paper, but all results in this section can be translated to any compact metric space. In fact, we believe that it would be possible to generalize also to any polish metric space, provided that the measures satisfy a bounded moment condition just like in the balanced case.
\begin{proposition}
    Let $\mu_1\in \calM_+(S^1)$ be any measure. Then the $\lambda$-WFR ball 
    \begin{equation}
        \calB_\lambda(\mu_1):=\{\mu_0\in \calM_+(S^1): \calU(\mu_0,\mu_1)\leq \lambda\}
    \end{equation}
    is weakly compact for any $\lambda\geq 0$. Moreover, the total mass of any $\mu_0\in \calB_{\lambda}(\mu_1)$ is bounded by 
    \begin{equation}\label{eq: bound-mass}
        |\mu_0|(S^1)\leq \left(\lambda+\sqrt{|\mu_1|(S^1)}\right)^2.
    \end{equation}
    \begin{proof}
        The ball $\calB_{\lambda}(\mu_1)$ is the $\lambda$-sublevelset of the map $\mu_0\mapsto \calU(\mu_0,\mu_1)$, which is weakly lower semicontinuous \cite{SavSod23}. In particular, $\calB_{\lambda}(\mu_1)$ is weakly closed. Since the base space is compact, it is also equally tight. By \cite[Theorem 2.2]{Liero_2017}, we obtain the first claim.
        By triangular inequality, we obtain the bound on the total mass as
        \begin{equation*}
            \sqrt{|\mu_0|(S^1)}=\calU(\mu_0,0)\leq \calU(\mu_0,\mu_1)+\calU(\mu_1,0)\leq \lambda+\sqrt{|\mu_1|(S^1)}.\qedhere
        \end{equation*}
    \end{proof}
\end{proposition}

Let $f:S^1\to \R$ be an upper semicontinuous function. Given $\mu_1\in \calM_+(S^1)$, we want to study the properties of 
\begin{equation}\label{eq:linear-problem}
    \sup_{\mu_0\in \calB_{\lambda}(\mu_1)} \int_{S^1} f\dd \mu_0 \tag{LP}.
\end{equation}

\begin{proposition}\label{prop: ex-sol}
    Problem \eqref{eq:linear-problem} is finite and it admits a solution.
    \begin{proof}
        Since $\calU$ metrizes the weak topology \cite[Theorem 7.15]{Liero_2017}, and $\calD_+(S^1)$ is weakly dense in $\calM_+(S^1)$ \cite[Proposition 2.1]{SavSod23}, there exists $\nu_1:=\sum_{j=1}^J r_1^j\delta_{x_1^j}\in \calD_+(S^1)$ with $|\nu_1|(S^1)=|\mu_1|(S^1)$ sufficiently close to $\mu_1$, i.e. $\calU(\nu_1,\mu_1)<\lambda/2$. In particular, 
        for any $\mu_0\in \calB_{\lambda}(\mu_1$), it holds
        \begin{equation}
            \int_{S^1}f \dd\mu_0\leq \|f\|_{L^\infty} |\mu_0|(S^1)\leq \|f\|_{L^\infty}\left(\lambda+\sqrt{|\mu_1|(S^1)}\right)^2<+\infty.
        \end{equation}
        We want to show that the map $\mu_0\mapsto \int_{S^1}f\dd\mu_0$ is upper semicontinuous. Let $\{\mu_0^k\}_k\subset \calB_{\lambda}(\mu_1)$ a sequence converging weakly to some $\mu_0^\infty\in \calB_{\lambda}(\mu_1)$. We approximate $f$ via the continuous functions $f_n(x):=\sup_{y\in S^1} [f(y)-n\operatorname{dist}(x,y)$], where $f_n\to f$ pointwise for $n\to \infty$, and $f_n\geq f$. By weak convergence of $\mu_0^k$, we get 
        \begin{equation}
            \limsup_{k\to \infty}\int_{S^1}f\dd\mu_0^k\leq \lim_{k\to \infty}\int_{S^1}f_n\dd\mu_0^k=\int_{S^1}f_n\dd\mu_0^\infty.
        \end{equation}
        By monotone convergence, the right-hand side converges to $\int_{S^1} f\dd\mu_0^\infty$, which gives the desired upper semicontinuity property. By Weierstrass Theorem we obtain the claim.
    \end{proof}
\end{proposition}

\begin{theorem}\label{thm: J+2}
    Let $\mu_1\in \calD^N_+(S^1)$ be a finitely supported measure on $S^1$ with exactly $N$ atoms. Then, problem \eqref{eq:linear-problem} admits a solution $\mu_0\in \calD_+(S^1)$ with at most $N+2$ support points.
    \begin{proof}
        We can rewrite problem \eqref{eq:linear-problem}, in terms of homogeneous couplings as
        \begin{equation}\label{eq: sup-coup}
            \sup_{\gamma\in \calP_2(B_R\times B_R)} \int_{\bC\times \bC} |x|^2 f\left(\theta(x)\right)\dd\gamma(x,y)\tag{CP}
        \end{equation}
        subject to $(\theta_1)_\#(\operatorname{r}_1^2\gamma)=\mu_1$ and the bound on the Fisher-Rao energy $J(\gamma)\leq \lambda^2$ with $R=\sqrt{2|\mu_1|(S^1)+\lambda^2+2\lambda\sqrt{|\mu_1|(S^1)}}$. Note that the restriction to $B_R\times B_R$ makes sense in view of Proposition \ref{prop: basicWFR} and the bound on the mass in Equation \eqref{eq: bound-mass}.
        Since $\mu_1$ is of the type $\sum_{j=1}^Jr_1^j\delta_{x_1^j}$, we can rewrite the marginal constraints as 
        \begin{equation}
            r_1^j=\int_{\bC\times \bC} |y|^2 \one _{\bC\times \theta^{-1}(x_1^j)}(x,y)\dd\gamma(x,y)
        \end{equation}
        Let us denote by $g_j:\bC\times \bC\to \R$ the function $g_j(x,y):=|y|^2 \one _{\bC\times \theta^{-1}(x_1^j)}(x,y)$. The full set of constraints on $\gamma$ are $N+2$ equations that read
        \begin{equation}\label{eq: cm1}
            \int_{\bC\times \bC}\dd\gamma=1,\qquad \int_{\bC\times \bC}|x-y|^2 \dd\gamma\leq\lambda^2, \qquad \int_{\bC\times \bC} g_j(x,y)\dd\gamma=r_1^j \quad \text{ for } j=1,...,N.\tag{C1}
        \end{equation}
        We want to prove that problem \eqref{eq: sup-coup} is equivalent to minimizing over discrete couplings in $\calP^{N+2}_{+,f}(B_R\times B_R)$ with $N+2$ atoms. 
        To do so, we first look at problem \eqref{eq: sup-coup}, with the following restricted set of constraints, for $0\leq \alpha\leq \lambda$,
        \begin{equation}\label{eq: cm2}
            \int_{\bC\times \bC}\dd\gamma=1,\qquad \int_{\bC\times \bC}|x-y|^2 \dd\gamma=\alpha^2, \qquad \int_{\bC\times \bC} g_j(x,y)\dd\gamma=r_1^j \quad \text{ for } j=1,...,N.\tag{C2}
        \end{equation}
        The only difference from the constraints in \eqref{eq: cm1} is the equivalence $J(\gamma)=\alpha^2$. The first marginal of a competitor of \eqref{eq: sup-coup} with constraints \eqref{eq: cm2} lives in the boundary of the $\alpha$ ball $\calB_\alpha(\mu_1)$. 
        The reason why we restricted the problem to \eqref{eq: cm2} is to use the standard theory of infinite dimensional linear programming. \cite[Appendix B, Proposition 1]{YueKuWie} ensures that the continuous problem \eqref{eq: sup-coup} with constraints $\eqref{eq: cm2}$ is equivalent to the discrete problem
        \begin{equation}\label{eq: relax}
            \sup_{\gamma\in \calD_{+,f}^{N+2}} \int_{\bC\times \bC} |x|^2 f\left(\theta(x)\right)\dd\gamma(x,y)\tag{DP}
        \end{equation}
        subject to \eqref{eq: cm2}.
        Note that \eqref{eq: cm2} is a subset of \eqref{eq: cm1}, so competitors of \eqref{eq: sup-coup} (or \eqref{eq: relax}) subject \eqref{eq: cm2} are also competitors of \eqref{eq: sup-coup} (or \eqref{eq: relax}) subject to \eqref{eq: cm1}, giving the inequalities $\eqref{eq: sup-coup}\eqref{eq: cm2}\leq \eqref{eq: sup-coup}\eqref{eq: cm1}$ and $\eqref{eq: relax}\eqref{eq: cm2}\leq \eqref{eq: relax}\eqref{eq: cm1}$. All together, it holds 
        \begin{equation}
            \eqref{eq: sup-coup}\eqref{eq: cm2}=\eqref{eq: relax}\eqref{eq: cm2}\leq \eqref{eq: relax}\eqref{eq: cm1} \leq\eqref{eq: sup-coup}\eqref{eq: cm1}.
        \end{equation}
        
        Before commenting on the equivalence between \eqref{eq: sup-coup}\eqref{eq: cm1} and \eqref{eq: sup-coup}\eqref{eq: cm2}, we want to prove the existence of solutions of \eqref{eq: relax}\eqref{eq: cm1}.
        The feasible region for $\gamma$ is the intersection of these three sets: 
        \begin{align}\label{eq: feasible}
            &\calF_1:=\calP_{+f}^{N+2}(\bC\times \bC);\\
            &\calF_2:=\{\gamma\in \calP_2(B_R\times B_R): (\theta_1)_\#(\operatorname{r}_1^2\gamma)=\mu_1\};\\
            &\calF_3:=\left\{\gamma\in \calM(\bC\times \bC): \int_{\bC\times \bC}|x-y|^2 \dd\gamma\leq \lambda^2\right\}.
        \end{align}
         Note that $\calF_1$ is narrowly closed because it is the image of the continuous map $P:S_{N+2}\times (\bC\times \bC)^{N+2}\to \calF_1$ given by $P(\zeta,z_1,...,z_{N+2})=\sum_j^{N+2}\zeta^j\delta_{z_j}$. $\calF_2$ is the preimage of $\mu_1$ under $(\theta_1)_\#\operatorname{r}_1^2$, which is narrowly continuous by Proposition \ref{prop: basicWFR}. $\calF_3$ is the sub levelset of the lower semicontinous energy $J$. 
         The intersection $\calF_1\cap \calF_2\cap \calF_3$ is tight because of point \ref{it: basicbound} in Proposition \ref{prop: basicWFR} and the bound in total mass in Equation \eqref{eq: bound-mass}. Thus, the feasible region is narrowly compact. 
         Moreover, the map
         \begin{equation}
             \gamma\mapsto\int_{\bC\times \bC} |x|^2 f\left(\theta(x)\right)\dd\gamma(x,y)
         \end{equation}
         is narrowly upper semicontinuous in the feasible region, which can be proven with the same arguments in the proof of Proposition \ref{prop: ex-sol}. Thus, the discrete problem \eqref{eq: relax} subject to \eqref{eq: cm1} admits a solution.

         The last thing to prove is whether \eqref{eq: sup-coup}\eqref{eq: cm1} and \eqref{eq: sup-coup}\eqref{eq: cm2} coincide. Let $\gamma$ be a solution of \eqref{eq: sup-coup} subject to \eqref{eq: cm1}. We know it exists because of Proposition \ref{prop: ex-sol} and \ref{prop: basicWFR}. Define $0\leq \alpha\leq \lambda$ such that $J(\gamma)=\alpha^2$. Then $\gamma$ is also a solution to \eqref{eq: sup-coup}\eqref{eq: cm2}. 

        We just proved the existence of a discrete measure $\gamma\in \calD^{N+2}_{+,f}(B_R\times B_R)$ that is solution of \eqref{eq: sup-coup} subject to \eqref{eq: cm1}. Taking its first marginal $\mu_0=(\theta_0)_\#(\operatorname{r}_0\gamma)$ gives the desired solution to \eqref{eq:linear-problem}.
    \end{proof}
   
\end{theorem}

\begin{corollary}
    Let $\mu_1\in \calD_+(S^1)$ be a finitely supported measure on $S^1$ with exactly $N$ atoms with vanishing first moment. Then, problem \eqref{eq:linear-problem} admits a solution $\mu_0\in \calD_+(S^1)\cap \calM^0(S^1)$ with at most $N+4$ support points.
    \begin{proof}
Two additional scalar constraints are added to \eqref{eq: sup-coup}, namely the real and imaginary
parts of the vanishing first moment condition
\[
\int_{S^1} x\, d\mu_0(x)=0\in\mathbb C.
\]
Using $\mu_0 = (\theta)_\#(r_0^2\gamma)$ and $r_0(x)=|x|$, we compute
\[
\int_{S^1} x\, d\mu_0(x)
=\int_{\mathbb C\times\mathbb C} |x|^2\theta(x)\, d\gamma(x,y)
=\int_{\mathbb C\times\mathbb C} |x|x\, d\gamma(x,y),
\]
where $|x|^2\theta(x)=|x|x$ for $x\neq 0$ (and equals $0$ at $x=0$).
Let $R=\sqrt{2|\mu_1|(S^1)+\lambda^2+2\lambda\sqrt{|\mu_1|(S^1)}}$ be as in the proof of Theorem \ref{thm: J+2}, so that the feasible
couplings are supported in $ B_R\times B_R$. Define $q: B_R\to\mathbb C$ by
$q(x)=|x|x$, and let $q_0,q_1$ be its real and imaginary parts. Then $q_0,q_1$ are bounded and
continuous on $ B_R$, hence $(x,y)\mapsto q_i(x)$ are bounded and continuous on
$ B_R\times B_R$.
Therefore, for any narrowly convergent sequence $\gamma_k\rightharpoonup \gamma$ in
$\mathcal P( B_R\times B_R)$,
\[
\lim_{k\to\infty}\int_{ B_R\times B_R} q_i(x)\, d\gamma_k(x,y)
=
\int_{ B_R\times B_R} q_i(x)\, d\gamma(x,y), \qquad i=0,1.
\]
Hence the set of couplings satisfying the additional moment constraints is closed under narrow
convergence. Intersecting with the compact feasible region from Theorem \ref{thm: J+2} yields a compact feasible
set, so the remainder of the LP argument applies and gives the claimed sparsity bound.
\end{proof}
\end{corollary}

\subsection{A convex one-homogeneous regularizer}\label{sec: homogenized}
In view of unifying the Wasserstein-Fisher-Rao distance and generalized conditional gradient methods, we introduce the following ``homogenized'' energy:
\begin{definition}\label{def: homogenized}
    Given a measure $\mu_1\in \M_+(S^1)$, we define the energy $\bS_{\mu_1}: \M(S^1)\to [0,+\infty]$ as
    \begin{equation}
        \bS_{\mu_1}(\mu_0):=\frac{|\mu_0|(S^1)}{2}\calU^2\left(\frac{\mu_0}{|\mu_0|(S^1)}, \frac{\mu_1}{|\mu_1|(S^1)}\right)
    \end{equation}
\end{definition}
with $\bS_{\mu_1}(0)=0$. Equivalently, we can write $\bS_{\mu_1}$ as 
\begin{equation}
    \bS_{\mu_1}(\mu_0)= \frac{1}{2}\calU^2 \left(\mu_0, \frac{|\mu_0|(S^1)}{|\mu_1|(S^1)}\mu_1\right).
\end{equation}
In any case, there is a rescaling so that the mass between the source and the target is equal. Note that the operator is independent of the mass of the target measure $\mu_1$. For simplicity, one could assume $\mu_1$ to be a probability measure.

\begin{proposition}
    For $\mu_1\in \M_+(S^1)$, $\bS_{\mu_1}$ is a convex and positively one-homogeneous functional. 
    \begin{proof}
        The one-homogeneous property holds by construction, so let us focus on the convexity. Outside of $\M_+(S^1)$, the energy $\bS_{\mu_1}$ is infinite, so we restrict our study to $\M_+(S^1)$, which is a convex cone in $\M(S^1)$. Consider the map $\Phi: \M_+(S^1)\to \R$ defined by 
        \begin{equation}
            \Phi(\mu_0):=\frac{1}{2}\calU^2 \left(\mu_0, \frac{\mu_1}{|\mu_1|(S^1)}\right).
        \end{equation}
        It is a convex map, thanks to Theorem \ref{thm: relaxation}. Now consider the function $\Psi: \M_+(S^1)\times \R_+\to \R$ given by
        \begin{equation}
            \Psi(\mu_0, t):=t \Phi\left(\frac{\mu_0}{t}\right).
        \end{equation}
        This is also a convex map thanks to \cite[Lemma 2.1]{IwaOnni09}. Moreover, we see that the energy $\bS_{\mu_1}$ is the composition of the map $\Psi$ with the inclusion $\iota: \M_+(S^1)\hookrightarrow \M_+(S^1)\times \R_+$ where 
        \begin{equation}
            \iota(\mu_0):=(\mu_0, |\mu_0|(S^1)).
        \end{equation}
        On positive measures, the inclusion $\iota$ is linear, making $\bS_{\mu_1}=\Psi\circ \iota$ a convex mapping.
    \end{proof}
\end{proposition}
We mentioned in Theorem \ref{thm: relaxation} (proved in \cite{SavSod23}) that the energy $\calU^2$ is the closed convex envelope of its restriction to the space $\Delta_+(S^1)\times \Delta_+(S^1)$ of pairs of point masses. One would expect that something similar could be said about the operator $\mathbb S_{\mu_1}$. This would lead to the following result regarding the extreme points of $\calB_{1}(\mu_1)$.
\begin{conjecture}
    Let \(\mu_1 \in \calD_+(S^1)\) be a finitely supported, non-negative measure.
    Then for every $\mu_e \in \operatorname{Ext}(\calB_1(\mu_1))$, there is some $x \in S^1$ with
    \[\supp(\mu_e) \subseteq  \supp(\mu_1) \cup \{x\}.\]
\end{conjecture}
In turn, another consequence would be the full characterization of the extreme points of $\calB_1(\mu_1)\cap \calM^0(S^1)$, due to Dubins' Theorem \cite{DUBINS1962237}, which states that the extreme points of the intersection of a compact convex set with an affine subspace of codimension \(m\) are convex combinations of at most \(m+1\) extreme points of the original compact convex set.

\begin{corollary}
  Let \(\mu_1 \in \calD_+(S^1)\) be a finitely supported, non-negative measure. Every extreme point of $\calB_1(\mu_1)\cap \calM^0(S^1)$ is a convex combination of at most three extreme points of $\calB_1(\mu_1)$, hence it support contains at most three additional points compared to the support of $\mu_1$.
\end{corollary}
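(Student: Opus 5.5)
The plan is to combine the (assumed) Conjecture with Dubins' Theorem, after checking that the hypotheses of Dubins' Theorem hold for $\calB_1(\mu_1)$ and $\calM^0(S^1)$. The corollary is stated as a consequence of the Conjecture, so I assume it throughout. First, $\calB_1(\mu_1)$ is a compact convex set. Weak compactness was proved in the proposition on $\lambda$-WFR balls. Convexity follows from Theorem \ref{thm: relaxation}: $\calU^2(\cdot,\mu_1)$ is convex, so its sublevel set $\{\calU^2\le 1\}$ is convex. (If instead $\calB_1(\mu_1)$ is meant as the unit sublevel set of $\bS_{\mu_1}$, convexity follows from the convexity proposition for $\bS_{\mu_1}$ and compactness from the mass bound; either reading works.) Since $\calM(S^1)$ with the weak-* topology is a locally convex space, Krein--Milman and Dubins apply.

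Second, I express $\calM^0(S^1)$ as the intersection of $\calM_+(S^1)$ with an affine subspace of codimension $m=2$. Write $\calM^0(S^1)\cap\calB_1(\mu_1) = \calB_1(\mu_1)\cap L$, where
\begin{equation}
L:=\Big\{\mu\in\calM(S^1): \int_{S^1}\Re x\,\dd\mu=0,\ \int_{S^1}\Im x\,\dd\mu=0\Big\}.
\end{equation}
This is the kernel of two weak-* continuous linear functionals, hence a closed subspace of codimension at most $2$. It can be assumed to be exactly $2$: if the codimension were smaller, the bound below would only improve. Dubins' Theorem then states that every extreme point of $\calB_1(\mu_1)\cap L$ is a convex combination of at most $m+1=3$ extreme points of $\calB_1(\mu_1)$. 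This gives the first claim.

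Third, the support statement. Let $\mu_e=\sum_{k=1}^{3}\zeta_k\mu^k$ with $\zeta\in S_3$ and $\mu^k\in\operatorname{Ext}(\calB_1(\mu_1))$. By the Conjecture, $\supp\mu^k\subseteq\supp\mu_1\cup\{x_k\}$. Since all measures are nonnegative, the support of a convex combination is the union of the supports of the terms with positive weight. Hence
\begin{equation}
\supp\mu_e\subseteq\supp\mu_1\cup\{x_1,x_2,x_3\},
\end{equation}
which is at most three points beyond $\supp\mu_1$.

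The main obstacle is the correct application of Dubins' Theorem in infinite dimensions. It needs the compactness of $\calB_1(\mu_1)$ in a locally convex topology in which the constraint functionals are continuous. Here this holds because $x\mapsto x$ is continuous on compact $S^1$, so the moment functionals are weak-* continuous. Everything else reduces to the assumed Conjecture.
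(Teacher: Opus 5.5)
Your argument is correct and follows the paper's route exactly. You assume the Conjecture, apply Dubins' theorem to the compact convex WFR ball $\calB_1(\mu_1)$ intersected with the codimension-two subspace given by the real and imaginary parts of the first moment, and take the union of the supports of the at most three extreme points. The one thing to drop is your parenthetical claim that the $\bS_{\mu_1}$-sublevel-set reading ``also works'': since $\bS_{\mu_1}(t\mu_1)=0$ for every $t\ge 0$, that set contains an unbounded ray, so there is no mass bound and it is neither compact nor linearly bounded, and Dubins' theorem needs exactly this boundedness.
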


However, proving that the operator $\bS_{\mu_1}$ is the closed convex envelope of its restriction to the space $\Delta_+(S^1)$ is challenging. The rescaling of the argument and the inherently unknown coefficients arising from it make the convex relaxation arguments of \cite{SavSod23} not directly applicable, since they interfere with the ensuing choices of convex combinations. Let us also note that in general not all extreme points are exposed, that is, there might not exist a linear functional such that they are the unique minimizer over the ball, so typically such an atomic decomposition result cannot be bypassed. Due to these challenges and the likely different nature of the techniques needed to obtain a complete characterization, we leave this direction open for future work.

\appendix
\section{Extension of the local optimality theory to extended-valued costs}\label{sec: app1}

In this appendix we extend the structural results of
\cite[Section~4 and Theorem~5.4]{Delon_2010}
to the case where the cost on $\mathbb R^2$ is allowed to take the value $+\infty$ outside a finite strip.
The goal is to justify the use of \cite[Lemma~4.13]{Delon_2010}
in the proof of Lemma~\ref{lemma: Delon}
for the lifted cost $\narrowcheck c_T$.

Let $c:\mathbb R^2\to[0,+\infty]$ be a Borel function satisfying:

\begin{enumerate}
\item[(H1)] (\emph{Finite strip}) There exists $\Lambda>0$ such that
\[
c(x,y)<\infty \quad\Longleftrightarrow\quad |x-y|<\Lambda.
\]
\item[(H2)] (\emph{Regularity in the finite region})
$c$ is lower semicontinuous and satisfies the Monge inequality
\[
c(x_1,y_1)+c(x_2,y_2)
\le
c(x_1,y_2)+c(x_2,y_1)
\]
whenever $x_1\leq x_2$, $y_1\leq y_2$ and all four quantities are finite.
\end{enumerate}

We consider periodic, locally finite transport plans $\gamma$
on $\mathbb R^2$ in the sense of \cite{Delon_2010},
i.e.
\[
\gamma(A+(2\pi k,2\pi k))=\gamma(A)
\]
for all Borel $A\subset\mathbb R^2$,
with marginals periodic lifts of probability measures on $S^1$.

\begin{definition}
Let $\gamma$ be such a plan.
We call $\gamma'$ a local modification of $\gamma$
if:
\begin{enumerate}
\item $\gamma'-\gamma$ is a compactly supported finite signed measure,
\item $\gamma'$ has the same marginals as $\gamma$,
\item $c\in L^1(\R^2, |\gamma'-\gamma|)$.
\end{enumerate}
A local modification is cost-reducing, or cost-equivalent if, respectively
\begin{equation}
    \int_{\R^2}c\dd (\gamma'-\gamma)<0, \qquad \int_{\R^2}c\dd (\gamma'-\gamma)=0.
\end{equation}
\end{definition}

\begin{definition}
A plan $\gamma$ is $c$-locally optimal
if it admits no cost-reducing  local modification.
\end{definition}

Let $\hat c(x,y):=\inf_{k\in\mathbb Z} c(x,y+2\pi k)$
be the induced cost on $S^1\times S^1$.
Let $\nu^*$ minimize
\[
\inf_{\nu\in\Gamma_b(\nu_0,\nu_1)} \int \hat c\, d\nu.
\]
where $\nu_0,\nu_1\in \calP(S^1)$ are absolutely continuous probability measures, and assume that this value is finite, which in turn implies that $\hat c$ is finite $\nu^*$-almost everywhere. Denoting by $\hat \Omega \subset S^1 \times S^1$ the set where $\hat c$ is finite, we can consider the pushforward $\iota_{\#}(\nu^* \mres \hat \Omega)$ though the identification $\iota: S^1 \times S^1 \to [0,2\pi)^2$ and extend it periodically to produce a lift $\gamma^* \in \mathcal{M}_+(\R^2 \times \R^2)$ with
\[
\mathrm{supp}(\gamma^*)\subset\operatorname{cl} \Omega, \qquad\qquad  \Omega:=\{ |x-y|<\Lambda\}.
\]
Moreover
\[
\int_{[0,2\pi)^2} c\, d\gamma^*
=
\int_{S^1\times S^1} \hat c\, d\nu^*.
\]

By \cite[Lemma~4.4]{Delon_2010}, a result that is not dependent on the properties of the cost,
there exists a plan $\zeta^*\in\Gamma_b(dt,dt)$ such that
\[
\gamma^*
=
(F_{\gamma_0}^{[-1]}\times F_{\gamma_1}^{[-1]})_\# \zeta^*,
\]
where we denoted by $\gamma_i$ the marginals of $\gamma^*$. Since $\gamma^*$ is supported in $\operatorname{cl}\Omega$,
the conjugated cost
\[
\tilde c(t,s)
:=
c(F_{\gamma_0}^{[-1]}(t),F_{\gamma_1}^{[-1]}(s))
\]
is finite $\zeta^*$ almost everywhere.

\begin{lemma}
Under the above assumptions,
$\zeta^*$ is $\tilde c$-locally optimal.
\end{lemma}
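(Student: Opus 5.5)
We argue by contradiction. Suppose $\zeta^*$ admits a cost-reducing local modification $\zeta'$ for $\tilde c$. We will push it forward to a cost-reducing modification of $\nu^*$ on the circle, contradicting the minimality of $\nu^*$.

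\emph{Step 1: transport the modification to $\R^2$.} Set $G:=F_{\gamma_0}^{[-1]}\times F_{\gamma_1}^{[-1]}$ and $\gamma':=G_\#\zeta'$. Since $G_\#\zeta^*=\gamma^*$ and pushforward is linear, $\gamma'-\gamma^*=G_\#(\zeta'-\zeta^*)$.
\begin{itemize}
\item This difference is compactly supported, because the pseudo-inverses map bounded sets to bounded sets.
\item $\gamma'$ has the marginals $\gamma_0,\gamma_1$, since $(F_{\gamma_i}^{[-1]})_\#\dd t=\gamma_i$.
\item The change of cost is preserved: $\int c\,\dd(\gamma'-\gamma^*)=\int\tilde c\,\dd(\zeta'-\zeta^*)<0$, and $c\in L^1(|\gamma'-\gamma^*|)$ follows from $\tilde c\in L^1(|\zeta'-\zeta^*|)$ via $|G_\#\rho|\le G_\#|\rho|$.
\end{itemize}
Hence $\gamma'$ is a cost-reducing local modification of $\gamma^*$ for $c$. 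Note that (H1)--(H2) are not needed for this step.

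\emph{Step 2: periodize and project to the circle.} The modification $\gamma'-\gamma^*$ is compactly supported but generally not periodic. Following the argument of \cite[Theorem~5.4 / Section~4]{Delon_2010}, define
\[
\nu':=\nu^*+\frac{1}{M}\,(q\times q)_\#(\gamma'-\gamma^*),
\]
where $M$ is large enough to preserve nonnegativity. A simpler route is to replace the modification by its restriction to a fundamental domain of the diagonal shift: since $\gamma^*$ is $(2\pi,2\pi)$-periodic, $(q\times q)_\#(\gamma'-\gamma^*)$ is a finite signed measure with zero marginals.

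Nonnegativity of $\nu'$ needs care. The negative part of $\gamma'-\gamma^*$ is bounded by $\gamma^*$ restricted to a compact set $K$. Its projection is therefore bounded by $(q\times q)_\#(\gamma^*\mres K)\le m\,\nu^*$, where $m$ is the number of translates of $[0,2\pi)^2$ that meet $K$. Taking $M\ge m$ makes $\nu'\ge 0$, and $\nu'\in\Gamma_b(\nu_0,\nu_1)$.

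For the cost, $\hat c(q(x),q(y))\le c(x,y)$. Where the modification adds mass, the pushed-forward cost on the circle is therefore at most the lifted one. Where it removes mass, it removes mass of $\gamma^*$, which was chosen supported where $\hat c\circ(q\times q)=c$ (the set $S$ in Lemma~\ref{lemma: Delon}). Together these give
\[
\int\hat c\,\dd\nu'-\int\hat c\,\dd\nu^*\le \frac1M\int c\,\dd(\gamma'-\gamma^*)<0,
\]
which contradicts the optimality of $\nu^*$.

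\emph{Main obstacle.} The delicate point is that $\gamma^*$ was built only from $\nu^*\mres\hat\Omega$, with support in $\operatorname{cl}\Omega$, and that the equality $\hat c\circ(q\times q)=c$ on $\supp\gamma^*$ must be justified. Infinite costs have to be handled so that no $\infty-\infty$ occurs. I would first prove that $c<\infty$ $\gamma^*$-a.e. and that $\hat c\circ(q\times q)=c$ $\gamma^*$-a.e. This uses (H1): for $2\pi>\Lambda$ at most one $k$ gives a finite value. Integrability of $c$ against $|\gamma'-\gamma^*|$ then makes every integral above finite and justifies the subtraction.
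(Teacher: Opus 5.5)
Your proof is correct and follows the paper's route. You push the modification forward by $F_{\gamma_0}^{[-1]}\times F_{\gamma_1}^{[-1]}$, project to the circle, and contradict the optimality of $\nu^*$ using that $\gamma^*$ lives where $c$ realizes $\hat c$. Your rescaled competitor $\nu^*+\frac{1}{M}(q\times q)_\#(\gamma'-\gamma^*)$, together with $\hat c\circ(q\times q)\le c$ on the added mass, is the careful version of the paper's informal step ``let $\nu'$ be the projection of $\gamma'$ on the circle'', which is not literally well defined for a non-periodic $\gamma'$.

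One small correction to your side remark: uniqueness of the realizing shift $k$ requires $\Lambda\le\pi$, not just $\Lambda<2\pi$. You do not need uniqueness, though. The inequality $c\ge\hat c\circ(q\times q)$ together with the cost identity $\int c\,\mathrm{d}\gamma^*=\int\hat c\,\mathrm{d}\nu^*$ over a fundamental domain already forces $c=\hat c\circ(q\times q)$ $\gamma^*$-a.e.
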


\begin{proof}
Let $\zeta'$ be a local modification of $\zeta^*$ and define
\begin{equation}
\gamma' := (F_{\gamma_0}^{[-1]}\times F_{\gamma_1}^{[-1]})_\#\zeta'.
\end{equation}
By construction, $\gamma'$ has the same marginals as $\gamma^*$ and $\gamma'-\gamma^*$ is a compactly supported finite signed measure.
Let $\nu'$ be the projection of $\gamma'$ on the circle. Since $\nu^*$ minimizes $\int_{S^1\times S^1} \hat c\, d\nu$ and $\gamma^*$ is supported on the set where $c$ realizes $\hat c$, we obtain
\begin{equation}
\int_{\R^2} c \, d(\gamma'-\gamma^*) \ge 0.
\end{equation}

Using the definition
\begin{equation}
\tilde c(t,s) := c\big(F_{\gamma_0}^{[-1]}(t),F_{\gamma_1}^{[-1]}(s)\big),
\end{equation}
and the relation between $\gamma'$ and $\zeta'$, we rewrite the previous inequality as
\begin{equation}
\int_{\R^2} \tilde c \, d(\zeta'-\zeta^*) \ge 0.
\end{equation}
Hence $\zeta^*$ admits no cost-reducing local modification for $\tilde c$, and therefore is $\tilde c$-locally optimal.
\end{proof}

Define the maps
        \begin{equation}
            r_{\zeta^*}(x,y):=\zeta^*((-\infty, x]\times (y,\infty)), \qquad l_{\zeta^*}(x,y):=\zeta^*((x,\infty)\times (-\infty, y]),
        \end{equation}
        which measure the amount of mass transported from the left (right) of $x$ to above (below) $y$, respectively. By \cite[Lemma 4.8]{Delon_2010}, there exist maps $w_{\zeta^*}, m_{\zeta^*}:\R\to \R$ with $w_{\zeta^*}$ increasing and such that 
        \begin{equation}
            r_{\zeta^*}(x, w_{\zeta^*}(x))=l_{\zeta^*}(x, w_{\zeta^*}(x))=m_{\zeta^*}(x).
        \end{equation}
A desired property of monotone couplings is $m_{\zeta^*}(x)=0$ for all $x$, so that zero mass is crossing and moving from $(-\infty, x]$ to $(w_{\zeta^*}(x),+\infty)$ and from $(x, +\infty)$ to $(-\infty, w_{\zeta^*}(x)]$. This motivates the next result, which is a strengthened version of \cite[Lemma 4.9]{Delon_2010} reflecting that in our case the modifications must be concentrated in the strip where $c$ is finite.
\begin{lemma}\label{lemma: app2}
    For every $x\in \R$ there exists a cost-equivalent local modification $\zeta_x\in \Gamma_b(\dd t, \dd t)$ of $\zeta^*$ with $w_{\zeta^*}(x)=w_{\zeta_x}(x)$, and $m_{\zeta_x}(x)=0$.
    \begin{proof}
        Assume $m:= m_{\zeta^*}(x)>0$ and let $y=w_{\zeta^*}(x)$. By assumption, 
        \begin{equation}
            r_{\zeta^*}(x, y)=l_{\zeta^*}(x, y)=m>0.
        \end{equation}
        Define 
        \begin{align}
            &y^-:=\sup\{y': l_{\zeta^*}(x,y')=0\}, \qquad y^+:=\inf\{y': r_{\zeta^*}(x,y')=0\},\\ 
            &x^-:=\sup\{x': r_{\zeta^*}(x',y)=0\}, \qquad x^+:=\inf\{x': l_{\zeta^*}(x',y)=0\}.
        \end{align}
        By \cite[Lemma 4.7]{Delon_2010}, the map $l_{\zeta^*}$ (resp. $r_{\zeta^*}$) is continuous, monotonically decreasing in its first (second) argument, and continuous, monotonically increasing in its second (first) argument. Therefore, since $m>0$, $y^-<y<y^+$ and $x^-<x<x^+$. It means that equal and non-zero mass is moved from $(x^-, x)$ to $(y,y^+)$ and from $(x, x^+)$ to $(y^-, y)$. 
        By construction of $x^\pm,y^\pm$, we can also argue that there are support points $(x^-,y')$ with $y'\in(y, y^+)$, $(x^+,y')$ with $y'\in (y^-,y]$, $(x',y^-)$ with $x'\in(x,x ^+)$, and $(x',y^+)$ with $x'\in(x^-, x]$, like in Figure \ref{fig:placeholder}. 
        This is because $x^-$ is a supremum threshold where $r_{\zeta^*}(\cdot,y)$ becomes positive, there exist sequences approaching $x^-$ with positive mass in the corresponding rectangle, hence the support intersects the boundary line. Same for others.
        \begin{figure}[!ht]
            \centering
            \begin{tikzpicture}[scale=0.7]
\fill[fill=black!10] (-4,-1)--(0,3)--(2,5)--(5,5)--(5,2)--(3,0)--(2,-1);
\draw[->] (-0.5,0)--(5,0);
\draw[->] (0,-0.5)--(0,5);
\draw (1,0) node[below]{$x^-$}
(4,0) node[below]{$x^+$}
(3,-0.15) node[below]{$x$}
(0,0.5) node[left]{$y^-$}
(0,4.5) node[left]{$y^+$}
(-0.15,2.5) node[left]{$y$}
(-2, 0) node{$\Omega$};
\draw[dotted] (1,-0.1)--(1,5)
(4,-.1)--(4,5)
(3,-.1)--(3,5)
(-.1,4.5)--(5,4.5)
(-.1,2.5)--(5,2.5)
(-.1,.5)--(5,.5);
\draw[cyan, line width=1.6](1,3) to[bend right=20] (2,4.5)
(2.6,2.5) to (3, 2.83);
\draw[blue, line width=1.6] (2,2) to (2.6,2.5)
(3, 2.83)--(3.2,3);
\draw[brown, line width=1.6](3.2, 0.5) to[bend left=10] (4,2);
\end{tikzpicture}
\begin{tikzpicture}[scale=0.7]
\fill[fill=black!10] (-4,-1)--(0,3)--(2,5)--(5,5)--(5,2)--(3,0)--(2,-1);
\draw[->] (-0.5,0)--(5,0);
\draw[->] (0,-0.5)--(0,5);
\draw (1,0) node[below]{$x^-$}
(4,0) node[below]{$x^+$}
(3,-0.15) node[below]{$x$}
(0,0.5) node[left]{$y^-$}
(0,4.5) node[left]{$y^+$}
(-0.15,2.5) node[left]{$y$}
(-2, 0) node{$\Omega$};
\draw[dotted] (1,-0.1)--(1,5)
(4,-.1)--(4,5)
(3,-.1)--(3,5)
(-.1,4.5)--(5,4.5)
(-.1,2.5)--(5,2.5)
(-.1,0.5)--(5,0.5);
\draw[brown, line width=1.6](1,0.5) to[bend left=10] (2,1.65)
(2.6,1.65) to (3,2);
\draw[blue, line width=1.6]
(2,2) to (2.6,2.5)
(3, 2.83)--(3.2,3);
\draw[cyan, line width=1.6](3.2, 3) to[bend right=10] (3.7,4.5)
(3.7,2.5) to (4,2.83);
\end{tikzpicture}
            \caption{An example of the local modification construction, where the left and right picture depicts the support of $\zeta^*$, and $\zeta_x$, respectively. The supports of $\tau_r$ on the left and of $\bar\tau_r$ on the right are highlighted in cyan. The supports of $\tau_l$ on the left and of $\bar\tau_l$ on the right are highlighted in brown. }
            \label{fig:placeholder}
        \end{figure}
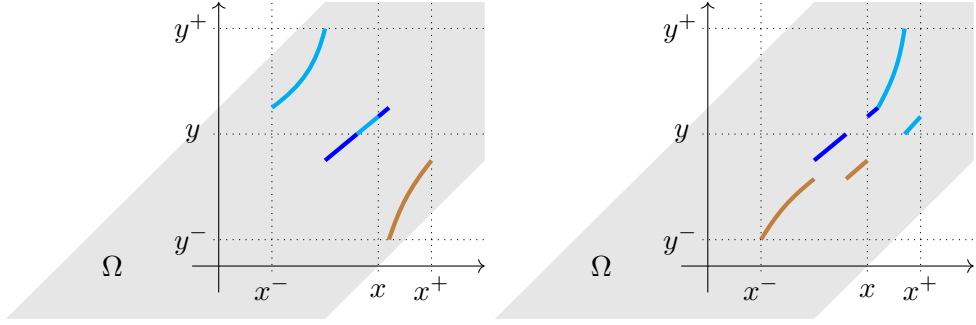
        From this, we infer that $|x-y^-|<\Lambda$ and $|y-x^-|<\Lambda$, and it follows that the open rectangle $\operatorname{SW}:=(x^-,x)\times (y^-, y)$ is completely contained in $\Omega$. Similarly for $\operatorname{NE}:=(x,x^+)\times (y, y^+)$.
        The other two open rectangles $\operatorname{NW,SE}$ are not necessarily contained in $\Omega$, but they clearly have non-zero intersection with it.
        Define for $A\subseteq (x^-, x^+)$ and  $B\subseteq (y^-,y^+)$,
        \begin{align}
            &\tau_r(A\times B):=\zeta^*([A\cap (x^-,x)]\times [B\cap(y,y^+)]);\\
            &\tau_l(A\times B):=\zeta^*([A\cap (x,x^+)]\times [B\cap(y^-,y)]).
        \end{align}
        These are just the restrictions of $\zeta^*$ to the NW and SE rectangles respectively.
        Their marginals are denoted by $\tau_r^i$ and $\tau_l^i$ with $i=0,1$. 
        Now, the maps $l_y(\cdot):=l_{\zeta^*}(\cdot,y)$ and $r_y(\cdot):=r_{\zeta^*}(\cdot,y)$ are cumulative distribution functions of $\tau_l^0$ and $\tau_r^0$, respectively. Via a similar reasoning to \cite[Lemma 4.4]{Delon_2010} or our Lemma \ref{lemma: equivalence Gauss}, there are couplings $\chi_r\in \Gamma_b(\dd t\mres[0,m], \tau_r^1)$ and $\chi_l\in \Gamma_b(\dd t\mres[0,m], \tau_l^1)$ such that 
        \begin{align}
            &\tau_r(A\times B)=\chi_r(r_y(A\cap (x^-,x))\times [B\cap(y,y^+)]);\\
            &\tau_l(A\times B)=\chi_l(l_y(A\cap (x,x^+))\times [B\cap(y^-,y)]).
        \end{align}
        Then, we interchange the origin of the mass by defining
        \begin{align}
            &\bar\tau_r(A\times B):=\chi_r(l_y(A\cap (x,x^+))\times [B\cap(y,y^+)]);\\
            &\bar\tau_l(A\times B):=\chi_l(r_y(A\cap (x^-,x))\times [B\cap(y^-,y)]),
        \end{align}
        and finally, 
        \begin{equation}
            \zeta_x(A\times B):=\zeta^*(A\times B)-\tau_r(A\times B)-\tau_l(A\times B)+\bar{\tau}_r(A\times B)+\bar\tau_l(A\times B).
        \end{equation}
        The coupling $\zeta_x$ has the same uniform marginals, and it satisfies the two other properties of local modifications. The remaining part of the proof of \cite[Lemma 4.9]{Delon_2010}, where the authors prove the claim about the cost, is valid in our setting since $\tilde{c}$ is finite on $\supp(\zeta^*)$, and so all four terms appearing in the Monge comparisons are finite.
    \end{proof}
\end{lemma}
The previous type of construction can be used recursively to create cost equivalent local modifications that are supported on the graph of the monotone function $w_{\zeta^*}$ on closed intervals.
\begin{lemma}\label{lemma: int-mod}
    For every $x'<x''$, there exists a cost equivalent local modification $\zeta_{[x',x'']}$ of $\zeta^*$ such that $w_{\zeta_{[x',x'']}}(x)=w_{\zeta^*}(x)$ for all $x\in [x',x'']$, $m_{\zeta_{[x',x'']}}(x')=m_{\zeta_{[x',x'']}}(x'')=0$, and $\zeta_{[x',x'']}\mres ([x',x''] \times \R)$ is supported on the graph of $w_{\zeta^*}$.
    \begin{proof}
        Let $\{x_i\}_i\subset [x', x'']$ be a dense and countable subset including its endpoints. The proof \cite[Lemma 4.10]{Delon_2010} applies the previous lemma recursively at every point $x_i$, and this construction still applies under our assumptions.
    \end{proof}
\end{lemma}

\begin{proposition}
Under the above assumptions,
$\zeta^*$ is $\tilde c$-equivalent to a shift plan
\[
\zeta^\theta_i := (\Id\times(\Id+\theta))_\# dt\mres [-i,i] + \zeta^*\mres (-\infty, -i)\times (-\infty, -i+\theta)\cup (i, +\infty)\times (i+\theta, \infty),
\]
for every $i\in \R_+$.
\begin{proof}
A consequence of Lemma \ref{lemma: int-mod} is \cite[Corollary 4.11]{Delon_2010}, which ensures that there exists $\theta\in \R$ such that $w_{\zeta^*}(x)=x+\theta$. 
    Then, for $i\in \R_+$, let $\zeta_i:=\zeta_{[-i,i]}$ be the local modification in Lemma \ref{lemma: int-mod}. On each interval $[-i,i]$, $\zeta_i$ is supported on the graph of $w_{\zeta^*}= \Id+\theta$ and it is cost-equivalent.  
\end{proof}
\end{proposition}

\small
\bibliographystyle{plain}
\bibliography{hkpolygons}

@article {Bru16,
    AUTHOR = {Bruveris, Martins},
     TITLE = {Optimal reparametrizations in the square root velocity
              framework},
   JOURNAL = {SIAM J. Math. Anal.},
  FJOURNAL = {SIAM Journal on Mathematical Analysis},
    VOLUME = {48},
      YEAR = {2016},
    NUMBER = {6},
     PAGES = {4335--4354},
      ISSN = {0036-1410,1095-7154},
   MRCLASS = {58B20 (58D15)},
  MRNUMBER = {3584579},
MRREVIEWER = {Alexander\ Schmeding},
       DOI = {10.1137/15M1014693},
       URL = {https://doi.org/10.1137/15M1014693},
}

@article{ SriKlaJosJer11,
  doi = {10.1109/tpami.2010.184},
  url = {https://doi.org/10.1109/tpami.2010.184},
  year = {2011},
  publisher = {Institute of Electrical and Electronics Engineers ({IEEE})},
  volume = {33},
  number = {7},
  pages = {1415--1428},
  author = {Srivastava, Anuj and Klassen, Eric and Joshi Shantanu H. and Jermyn, Ian H.},
  title = {Shape Analysis of Elastic Curves in {E}uclidean Spaces},
  fjournal = {{IEEE} Transactions on Pattern Analysis and Machine Intelligence},
  journal = {IEEE Trans. Pattern Anal. Mach. Intell.}
}

@article { BauHarKlas22,
    AUTHOR = {Bauer, Martin and Hartman, Emmanuel and Klassen, Eric},
     TITLE = {The square root normal field distance and unbalanced optimal
              transport},
   JOURNAL = {Appl. Math. Optim.},
  FJOURNAL = {Applied Mathematics and Optimization},
    VOLUME = {85},
      YEAR = {2022},
    NUMBER = {3},
     PAGES = {Paper No. 22, 40},
      ISSN = {0095-4616,1432-0606},
   MRCLASS = {49Q22 (49Q10)},
  MRNUMBER = {4419338},
MRREVIEWER = {Marc\ Sedjro},
       DOI = {10.1007/s00245-022-09867-y},
       URL = {https://doi.org/10.1007/s00245-022-09867-y},
}

@article {charon2020lengthmeasuresplanarclosed,
    AUTHOR = {Charon, Nicolas and Pierron, Thomas},
     TITLE = {On length measures of planar closed curves and the comparison
              of convex shapes},
   JOURNAL = {Ann. Global Anal. Geom.},
  FJOURNAL = {Annals of Global Analysis and Geometry},
    VOLUME = {60},
      YEAR = {2021},
    NUMBER = {4},
     PAGES = {863--901},
      ISSN = {0232-704X,1572-9060},
   MRCLASS = {53A04 (28A75 49Q20 49Q22)},
  MRNUMBER = {4328041},
MRREVIEWER = {Alina\ Stancu},
       DOI = {10.1007/s10455-021-09795-0},
       URL = {https://doi.org/10.1007/s10455-021-09795-0},
}

@article {michor2006riemannian,
    AUTHOR = {Michor, Peter and Mumford, David},
     TITLE = {Riemannian geometries on spaces of plane curves},
   JOURNAL = {J. Eur. Math. Soc. (JEMS)},
  FJOURNAL = {Journal of the European Mathematical Society (JEMS)},
    VOLUME = {8},
      YEAR = {2006},
    NUMBER = {1},
     PAGES = {1--48},
      ISSN = {1435-9855,1435-9863},
   MRCLASS = {58B20 (58D15)},
  MRNUMBER = {2201275},
MRREVIEWER = {Nikolai\ K.\ Smolentsev},
       DOI = {10.4171/JEMS/37},
       URL = {https://doi.org/10.4171/JEMS/37},
}

@article {gallouët2024regularitytheorygeometryunbalanced,
    AUTHOR = {Gallou\"et, Thomas and Ghezzi, Roberta and Vialard, Fran\c
              cois-Xavier},
     TITLE = {Regularity theory and geometry of unbalanced optimal
              transport},
   JOURNAL = {J. Funct. Anal.},
  FJOURNAL = {Journal of Functional Analysis},
    VOLUME = {289},
      YEAR = {2025},
    NUMBER = {7},
     PAGES = {Paper No. 111042, 54},
      ISSN = {0022-1236,1096-0783},
   MRCLASS = {49Q22},
  MRNUMBER = {4903404},
       DOI = {10.1016/j.jfa.2025.111042},
       URL = {https://doi.org/10.1016/j.jfa.2025.111042},
}

@article{bauer2024elastic,
  title={Elastic metrics on spaces of euclidean curves: Theory and algorithms},
  author={Bauer, Martin and Charon, Nicolas and Klassen, Eric and Kurtek, Sebastian and Needham, Tom and Pierron, Thomas},
  journal={Journal of Nonlinear Science},
  volume={34},
  number={3},
  pages={56},
  year={2024},
  publisher={Springer}
}

@article{MichorSRNF,
  title={Closed surfaces with different shapes that are indistinguishable by the {SRNF}},
  author={Michor, Peter and Klassen, Eric},
  journal={Archivum Mathematicum},
  volume={56},
  number={2},
  pages={107-114},
  year={2020},
  publisher={Masaryk University}
}

@article{hundrieser2021statisticscircularoptimaltransport,
      title={The Statistics of Circular Optimal Transport}, 
      author={Shayan Hundrieser and Marcel Klatt and Axel Munk},
      year={2022},
      journal={Directional Statistics for Innovative Applications},
      publisher={Springer},
    pages={57-82},
}

@InProceedings{SriSRNF,
author="Jermyn, Ian H.
and Kurtek, Sebastian
and Klassen, Eric
and Srivastava, Anuj",
title="Elastic Shape Matching of Parameterized Surfaces Using Square Root Normal Fields",
booktitle="Computer Vision -- ECCV 2012",
year="2012",
publisher="Springer, Berlin, Heidelberg",
noaddress="Berlin, Heidelberg",
pages="804--817",
isbn="978-3-642-33715-4"
}

@article {SavSod23,
    AUTHOR = {Savar\'e, Giuseppe and Sodini, Giacomo Enrico},
     TITLE = {A relaxation viewpoint to unbalanced optimal transport:
              duality, optimality and {M}onge formulation},
   JOURNAL = {J. Math. Pures Appl. (9)},
  FJOURNAL = {Journal de Math\'ematiques Pures et Appliqu\'ees. Neuvi\`eme
              S\'erie},
    VOLUME = {188},
      YEAR = {2024},
     PAGES = {114--178},
      ISSN = {0021-7824,1776-3371},
   MRCLASS = {49Q22 (28A33 49K27)},
  MRNUMBER = {4756504},
MRREVIEWER = {Danka\ Lu\v ci\'c},
       DOI = {10.1016/j.matpur.2024.05.009},
       URL = {https://doi.org/10.1016/j.matpur.2024.05.009},
}

@article{Eckhardt_2019,
   title={Elastic energy regularization for inverse obstacle scattering problems},
   volume={35},
   ISSN={1361-6420},
   url={http://dx.doi.org/10.1088/1361-6420/ab3034},
   DOI={10.1088/1361-6420/ab3034},
   number={10},
   journal={Inverse Problems},
   publisher={IOP Publishing},
   author={Eckhardt, Julian and Hiptmair, Ralf and Hohage, Thorsten and Schumacher, Hendrik and Wardetzky, Max},
   year={2019},
   pages={104009} }

@article {HaBaKla24,
    AUTHOR = {Hartman, Emmanuel and Bauer, Martin and Klassen, Eric},
     TITLE = {Square root normal fields for {L}ipschitz surfaces and the
              {W}asserstein {F}isher {R}ao metric},
   JOURNAL = {SIAM J. Math. Anal.},
  FJOURNAL = {SIAM Journal on Mathematical Analysis},
    VOLUME = {56},
      YEAR = {2024},
    NUMBER = {2},
     PAGES = {2171--2190},
      ISSN = {0036-1410,1095-7154},
   MRCLASS = {49Q10 (49Q22)},
  MRNUMBER = {4715282},
MRREVIEWER = {Qinfeng\ Li},
       DOI = {10.1137/22M1544452},
       URL = {https://doi.org/10.1137/22M1544452},
}

@article{IwaOnni09,
author = {Iwaniec, Tadeusz and Onninen, Jani},
year = {2009},
pages = {927-986},
title = {Hyperelastic Deformations of Smallest Total Energy},
volume = {194},
number={3},
journal = {Archive for Rational Mechanics and Analysis},
doi = {10.1007/s00205-008-0192-7}
}

@article{DUBINS1962237,
title = {On extreme points of convex sets},
journal = {Journal of Mathematical Analysis and Applications},
volume = {5},
number = {2},
pages = {237-244},
year = {1962},
issn = {0022-247X},
doi = {https://doi.org/10.1016/S0022-247X(62)80007-9},
url = {https://www.sciencedirect.com/science/article/pii/S0022247X62800079},
author = {Lester E. Dubins}
}

@article {Younes_2008,
    AUTHOR = {Younes, Laurent and Michor, Peter and Shah, Jayant and
              Mumford, David},
     TITLE = {A metric on shape space with explicit geodesics},
   JOURNAL = {Atti Accad. Naz. Lincei Rend. Lincei Mat. Appl.},
  FJOURNAL = {Atti della Accademia Nazionale dei Lincei. Rendiconti Lincei.
              Matematica e Applicazioni},
    VOLUME = {19},
      YEAR = {2008},
    NUMBER = {1},
     PAGES = {25--57},
      ISSN = {1120-6330,1720-0768},
   MRCLASS = {58D10 (58B20 94A08)},
  MRNUMBER = {2383560},
MRREVIEWER = {Fran\c{c}ois\ Gay-Balmaz},
       DOI = {10.4171/RLM/506},
       URL = {https://doi.org/10.4171/RLM/506},
}

@article {Needham_2020,
    AUTHOR = {Needham, Tom and Kurtek, Sebastian},
     TITLE = {Simplifying transforms for general elastic metrics on the
              space of plane curves},
   JOURNAL = {SIAM J. Imaging Sci.},
  FJOURNAL = {SIAM Journal on Imaging Sciences},
    VOLUME = {13},
      YEAR = {2020},
    NUMBER = {1},
     PAGES = {445--473},
      ISSN = {1936-4954},
   MRCLASS = {58B20 (58E50 68U05)},
  MRNUMBER = {4075332},
MRREVIEWER = {Jiansong\ Deng},
       DOI = {10.1137/19M1265132},
       URL = {https://doi.org/10.1137/19M1265132},
}

@article{Liero_2017,
   title={Optimal Entropy-Transport problems and a new {H}ellinger–{K}antorovich distance between positive measures},
   volume={211},
   ISSN={1432-1297},
   url={http://dx.doi.org/10.1007/s00222-017-0759-8},
   DOI={10.1007/s00222-017-0759-8},
   number={3},
   journal={Inventiones mathematicae},
   publisher={Springer Science and Business Media LLC},
   author={Liero, Matthias and Mielke, Alexander and Savaré, Giuseppe},
   year={2017},
   pages={969–1117} }

@book {Santa,
    AUTHOR = {Santambrogio, Filippo},
     TITLE = {Optimal transport for applied mathematicians},
    SERIES = {Progress in Nonlinear Differential Equations and their
              Applications},
    VOLUME = {87},
 PUBLISHER = {Birkh\"auser/Springer, Cham},
      YEAR = {2015},
     PAGES = {xxvii+353},
      ISBN = {978-3-319-20827-5; 978-3-319-20828-2},
   MRCLASS = {49-02 (35J96 49J45 49M29 58E50 90C05 90C48 91B02)},
  MRNUMBER = {3409718},
MRREVIEWER = {Luigi\ De Pascale},
       DOI = {10.1007/978-3-319-20828-2},
       URL = {https://doi.org/10.1007/978-3-319-20828-2},
}

@inproceedings{bonet2023sphericalslicedwasserstein,
title={Spherical Sliced-{W}asserstein},
author={Cl{\'e}ment Bonet and Paul Berg and Nicolas Courty and Fran{\c{c}}ois Septier and Lucas Drumetz and Minh Tan Pham},
booktitle={The Eleventh International Conference on Learning Representations },
year={2023},
url={https://openreview.net/forum?id=jXQ0ipgMdU}
}

@article{Delon_2010,
   title={Fast Transport Optimization for Monge Costs on the Circle},
   volume={70},
   ISSN={1095-712X},
   url={http://dx.doi.org/10.1137/090772708},
   DOI={10.1137/090772708},
   number={7},
   journal={SIAM Journal on Applied Mathematics},
   publisher={Society for Industrial & Applied Mathematics (SIAM)},
   author={Delon, Julie and Salomon, Julien and Sobolevski, Andrei},
   year={2010},
   pages={2239–2258} }

@article{Cristinelli_2025,
   title={Conditional gradients for total variation regularization with {PDE} constraints: a graph cuts approach},
   ISSN={1573-2894},
   url={http://dx.doi.org/10.1007/s10589-025-00699-4},
   DOI={10.1007/s10589-025-00699-4},
   journal={Computational Optimization and Applications},
   publisher={Springer Science and Business Media LLC},
   author={Cristinelli, Giacomo and Iglesias, José A. and Walter, Daniel},
   year={2026},
   volume={93},
   pages={209-265}}

@unpublished{cristinelli2025linearconvergenceonecutconditional,
      title={Linear convergence of a one-cut conditional gradient method for total variation regularization}, 
      author={Giacomo Cristinelli and José A. Iglesias and Daniel Walter},
      year={2025},
      note={Preprint arXiv 2504.16899 [math.OC]},
      eprint={2504.16899},
      archivePrefix={arXiv},
      primaryClass={math.OC},
      url={https://arxiv.org/abs/2504.16899}, 
}

@article {YueKuWie,
    AUTHOR = {Yue, Man-Chung and Kuhn, Daniel and Wiesemann, Wolfram},
     TITLE = {On linear optimization over {W}asserstein balls},
   JOURNAL = {Math. Program.},
  FJOURNAL = {Mathematical Programming},
    VOLUME = {195},
      YEAR = {2022},
    NUMBER = {1-2},
     PAGES = {1107--1122},
      ISSN = {0025-5610,1436-4646},
   MRCLASS = {90C25 (49Q22 90C17 90C48)},
  MRNUMBER = {4499079},
MRREVIEWER = {Hao\ Wu},
       DOI = {10.1007/s10107-021-01673-8},
       URL = {https://doi.org/10.1007/s10107-021-01673-8},
}

@article { CarIglWal25,
    AUTHOR = {Carioni, Marcello and Iglesias, Jos\'{e} A. and Walter, Daniel},
     TITLE = {Extremal {P}oints and {S}parse {O}ptimization for
              {G}eneralized {K}antorovich--{R}ubinstein {N}orms},
   JOURNAL = {Found. Comput. Math.},
  FJOURNAL = {Foundations of Computational Mathematics. The Journal of the
              Society for the Foundations of Computational Mathematics},
    VOLUME = {25},
      YEAR = {2025},
    NUMBER = {1},
     PAGES = {103--144},
      ISSN = {1615-3375},
   MRCLASS = {49Q22 (46A55 52A40 65J22)},
  MRNUMBER = {4865120},
       DOI = {10.1007/s10208-023-09634-7},
}

@article{BreCarFanRom21,
  title={On the extremal points of the ball of the {B}enamou--{B}renier energy},
  author={Bredies, Kristian and Carioni, Marcello and Fanzon, Silvio and Romero, Francisco},
  fjournal={Bulletin of the London Mathematical Society},
  journal={Bull. Lond. Math. Soc.},
  volume={53},
  number={5},
  pages={1436--1452},
  year={2021},
  publisher={Wiley Online Library}
}

@article{BreCarFanRom23,
	author = {Bredies, Kristian and Carioni, Marcello and Fanzon, Silvio and Romero, Francisco},
	date = {2023/06/01},
	doi = {10.1007/s10208-022-09561-z},
	id = {Bredies2023},
	isbn = {1615-3383},
	journal = {Found. Comput. Math.},
	number = {3},
	pages = {833--898},
	title = {A Generalized Conditional Gradient Method for Dynamic Inverse Problems with Optimal Transport Regularization},
	url = {https://doi.org/10.1007/s10208-022-09561-z},
	volume = {23},
	year = {2023}
}

@article{BreCarFan22,
 author = {Bredies, Kristian and Carioni, Marcello and Fanzon, Silvio},
 title = {A superposition principle for the inhomogeneous continuity equation with {Hellinger}-{Kantorovich}-regular coefficients},
 fjournal = {Communications in Partial Differential Equations},
 journal = {Commun. Partial Differ. Equations},
 issn = {0360-5302},
 volume = {47},
 number = {10},
 pages = {2023--2069},
 year = {2022},
 language = {English},
 doi = {10.1080/03605302.2022.2109172},
 zbMATH = {7600438},
 Zbl = {1518.35226}
}

@article{LavBlaAub24,
 author = {Laville, Bastien and Blanc-F{\'e}raud, Laure and Aubert, Gilles},
 title = {A {{\(\varGamma\)}}-convergence result and an off-the-grid charge algorithm for curve reconstruction in inverse problems},
 fjournal = {Journal of Mathematical Imaging and Vision},
 journal = {J. Math. Imaging Vis.},
 issn = {0924-9907},
 volume = {66},
 number = {4},
 pages = {572--583},
 year = {2024},
 language = {English},
 doi = {10.1007/s10851-024-01190-1},
 zbMATH = {8014369},
 Zbl = {1561.68188}
}

@article { CasDuvPet22,
    AUTHOR = {De Castro, Yohann and Duval, Vincent and Petit, Romain},
     TITLE = {Towards off-the-grid algorithms for total variation regularized inverse problems},
   JOURNAL = {J. Math. Imaging Vision},
  FJOURNAL = {Journal of Mathematical Imaging and Vision},
    VOLUME = {65},
      YEAR = {2023},
    NUMBER = {1},
     PAGES = {53--81},
      ISSN = {0924-9907},
   MRCLASS = {68U10},
  MRNUMBER = {2910876},
       DOI = {10.1007/s10851-022-01115-w},
       URL = {https://doi.org/10.1007/s10851-022-01115-w},
}

@article{BreCarFanWal24,
 author = {Bredies, Kristian and Carioni, Marcello and Fanzon, Silvio and Walter, Daniel},
 title = {Asymptotic linear convergence of fully-corrective generalized conditional gradient methods},
 fjournal = {Mathematical Programming. Series A. Series B},
 journal = {Math. Program.},
 issn = {0025-5610},
 volume = {205},
 number = {1-2 (A)},
 pages = {135--202},
 year = {2024},
 language = {English},
 doi = {10.1007/s10107-023-01975-z},
 zbMATH = {7829603},
 Zbl = {1536.65058}
}

@article{DenDuvPeySou20,
 author = {Denoyelle, Quentin and Duval, Vincent and Peyr{\'e}, Gabriel and Soubies, Emmanuel},
 title = {The sliding {Frank}-{Wolfe} algorithm and its application to super-resolution microscopy},
 fjournal = {Inverse Problems},
 journal = {Inverse Probl.},
 issn = {0266-5611},
 volume = {36},
 number = {1},
 pages = {42},
 note = {Paper No. 014001},
 year = {2020},
 language = {English},
 doi = {10.1088/1361-6420/ab2a29},
 zbMATH = {7153864},
 Zbl = {1434.65082}
}

@unpublished{ CarDel23,
  doi = {10.48550/ARXIV.2311.08072},
  url = {https://arxiv.org/abs/22311.08072},
  author = {Carioni, Marcello and Del Grande, Leonardo},
  title = {A general theory for exact sparse representation recovery in convex optimization},
  note  = {Preprint arXiv:2311.08072 [math.OC]},
  publisher = {arXiv},
  type = {Preprint},
  year = {2023}
}

@article{GaoKle23,
 author = {Gao, Rui and Kleywegt, Anton},
 title = {Distributionally robust stochastic optimization with {Wasserstein} distance},
 fjournal = {Mathematics of Operations Research},
 journal = {Math. Oper. Res.},
 issn = {0364-765X},
 volume = {48},
 number = {2},
 pages = {603--655},
 year = {2023},
 language = {English},
 doi = {10.1287/moor.2022.1275},
 zbMATH = {7808961},
 Zbl = {1540.90171}
}

@article{ MohKuh18,
 author = {Mohajerin Esfahani, Peyman and Kuhn, Daniel},
 title = {Data-driven distributionally robust optimization using the {Wasserstein} metric: performance guarantees and tractable reformulations},
 fjournal = {Mathematical Programming. Series A. Series B},
 journal = {Math. Program.},
 issn = {0025-5610},
 volume = {171},
 number = {1-2 (A)},
 pages = {115--166},
 year = {2018},
 language = {English},
 doi = {10.1007/s10107-017-1172-1},
 url = {infoscience.epfl.ch/record/207778},
 zbMATH = {6945237},
 Zbl = {1433.90095}
}

@incollection{Schneider1993ConvexSurfaces,
  author    = {Schneider, Rolf},
  title     = {Convex Surfaces, Curvature and Surface Area Measures},
  booktitle = {Handbook of Convex Geometry},
  pages     = {273--299},
  year      = {1993},
  publisher = {Elsevier}
}

@unpublished{sellaroli2017algorithmreconstructconvexpolyhedra,
      title={An algorithm to reconstruct convex polyhedra from their face normals and areas}, 
      author={Giuseppe Sellaroli},
      year={2017},
      note={Preprint arXiv 1712.00825 [cs.CG]},
      eprint={1712.00825},
      archivePrefix={arXiv},
      primaryClass={cs.CG},
      url={https://arxiv.org/abs/1712.00825}, 
}
\normalsize

\end{document}